\def\a{\alpha} \def\b{\beta} \def\d{\delta}  
 \def\s{\sigma}  
\def\I{\mathbb{I}}
\def\<{\langle} \def\>{\rangle}
\def\inv{^{-1}}
\renewcommand\ge{\geqslant}
\renewcommand\le{\leqslant}
\newcommand\eg{e.g.}
\newcommand\forget[1]{}
\DeclareMathOperator{\spann}{span}
\DeclareMathOperator{\lt}{L}
\DeclareMathOperator{\End}{End}
\renewcommand{\Im}{\mathop{\rm Im}\nolimits}
\renewcommand{\ker}{\mathop{\rm ker}\nolimits}
\newcommand{\chr}{\mathop{\rm char}\nolimits}
\newenvironment{smatrix}{\left(\begin{smallmatrix}}{\end{smallmatrix}\right)}
\newcommand\pmatr[1]{\begin{pmatrix}#1\end{pmatrix}}
\DeclareMathOperator{\tr}{tr}
\newtheorem{thm}{Theorem}[section]
\newtheorem{lma}[thm]{Lemma}
\newtheorem{prop}[thm]{Proposition}
\newtheorem{cor}[thm]{Corollary}
\theoremstyle{definition}
\newtheorem{dfn}[thm]{Definition}
\theoremstyle{remark}
\theoremstyle{definition}
\newtheorem{ex}[thm]{Example}
\renewcommand{\theenumi}{\alph{enumi}}
\newcommand{\V}{\mathcal{CD}}
\newcommand{\n}{\mathop{\rm n}\nolimits}
\newcommand{\ep}{\hspace*{-.125em}\times\hspace*{-.125em}}
\title[Von-Neumann finiteness and reversibility in non-associative algebras]{Von-Neumann finiteness and reversibility in some classes of non-associative algebras}
\begin{document}

\author{Erik Darp\"{o}}
\address[Darp\"o]{Graduate School of Mathematics, 
Nagoya University, 
Furo-cho, Chikusa-ku, 
Nagoya, 464-8602, Japan}
\email{darpo@math.nagoya-u.ac.jp}

\author{Patrik Nystedt}
\address[Nystedt]{University West,
Department of Engineering Science, 
SE-46186 Trollh\"{a}ttan, 
Sweden}
\email{patrik.nystedt@hv.se}

\subjclass[2010]{17D05, 16K20, 17C55, 16W10, 16U99}

\keywords{von-Neumann finite, Dedekind finite, directly finite, reversible, quadratic
  algebra, involutive algebra}

\begin{abstract}
We investigate criteria for von-Neumann finiteness and reversibility in some classes of
non-associative algebras. 
Types of algebras that are studied include alternative, flexible, quadratic and involutive
algebras, as well as algebras obtained by the Cayley--Dickson doubling process. 
Our results include precise criteria for von-Neumann finiteness and reversibility of
involutive algebras in terms of isomorphism types of their $3$-dimensional subalgebras.
\end{abstract}

\maketitle

\section{Introduction}

A unital ring $A$ is called \emph{von-Neumann finite}
(or Dedekind finite, or weakly $1$-finite,
or affine finite, or directly finite, or inverse symmetric)
if every one-sided inverse in $A$ also is two-sided,
in other words, if for all $a,b \in A$ satis\-fying $ab = 1$,
the relation $ba = 1$ also holds.
Many different classes of {\it associative} rings have been shown 
to be von-Neumann finite, for instance
noetherian, 
self-injective and
PI-rings \cite{lam2007}.
Also group rings over fields of characteristic zero
\cite{kaplansky1969,montgomery1969,passman1971},
or, more generally,
endomorphism rings of permutation modules over such rings \cite{lundstrom2003,lundstrom2004}, 
have been shown to be von-Neumann finite. 
Von-Neumann finiteness for group rings of positive characteristic has,
however, remained an open problem (for partial results, 
see \cite{aop02} and \cite{elek2004}).
Of course, not all associative unital rings are von-Neumann finite;
for instance if $V$ is a vector space, then it is easy to see that $\End(V)$ 
is von-Neumann finite if and only if $V$ is finite dimensional. 

Following Cohn \cite{cohn99}, we say that $A$ is 
{\it reversible} if for all $a,b \in A$ satisfying $ab = 0$,
the relation $ba = 0$ also holds.
It is easy to see that the class of associative reversible rings
is properly contained in the class of associative von-Neumann finite rings.
Indeed, suppose that $A$ is an associative reversible ring
and $ab=1$ for some $a,b \in A$. Then, since $(ba-1)b = 0$,
we get that $b(ba-1)=0$ that is $b^2 a = b$ which implies that
$ba = abba = ab = 1$. 
Moreover, if $V$ is a vector space
satisfying $1 < {\rm dim}(V) < \infty$,
then ${\rm End}(V)$ is von-Neumann finite but not reversible.

In this article, we consider von-Neumann finiteness and reversibility for some classes of
unital rings which are not associative -- apparently a new line of investigation.
%
Our motivation for doing this is two-fold. 
First, we wish to see to which extent patterns and phenomena from the associative
context reproduce in more general classes of algebras, and how far the
associative theory can be generalised.
Second, the notions of von-Neumann finiteness and reversibility are relevant by
themselves in the structure theory of non-associative algebras. 
Particularly, a long-standing problem has been to understand the structure of
\emph{quadratic division algebras}
(see, \eg, \cite{cdkr99,nform,revisited,dieterichohman,lindberg04,osborn62}). 
The division property being notoriously difficult to study directly, investigation of
related classes of quadratic algebras can provide insight leading to a greater
understanding of division algebras.
The reversible and von-Neumann finite quadratic algebras are two such classes. While both
properly contain the class of quadratic division algebras, they are also sufficiently
narrowly defined to make discernible certain particular structural properties of
these classes of algebras. 

Let $F$ be a field. 
An \emph{$F$-algebra} is a vector space $A$ over $F$ endowed with a bilinear multiplication map $A\times A\to A,\:(a,b)\mapsto ab$. An $F$-algebra $A$ is said to be \emph{unital} if it possesses a multiplicative identity element $1=1_A$.
In this article, we will consider the problem of characterising von-Neumann finiteness and reversibility for the classes of non-associative algebras defined below.

\begin{dfn} An $F$-algebra $A$ is said to be
  \begin{enumerate}
  \item \emph{alternative} if it satisfies the identities $a^2b=a(ab)$
    and $ab^2=(ab)b$ for all $a,b\in A$. 
    Equivalently, every subalgebra of $A$ generated by at most two elements is associative \cite[Theorem~3.1]{schafer1995}.
  \item \emph{flexible} if it satisfies the identity $a(ba)=(ab)a$ for all $a,b\in A$.
  \item \emph{quadratic} if it is unital and the elements $1,a,a^2$ are
    linearly dependent for all $a\in A$.
  \item \emph{involutive} if it is unital and there exists an anti-automorphism $\sigma$
    of $A$ such that $\s^2=\I_A$, $a+\s(a)\in F1$ and $a\s(a)\in F1$ for all $a\in A$.
    We will use the notation $\s(a)=\bar{a}$ for the involution in $A$. 
    For any $a\in A$, the scalars $\tr(a)=a+\bar{a}$ and $\n(a)=a\bar{a}$ are
    called the \emph{trace} and the \emph{norm} of $A$, respectively. 
  \end{enumerate}
\end{dfn}

From here on, all algebras will be assumed to be unital (but not necessarily associative).
Throughout, $F$ denotes a field, and $A$ an $F$-algebra. 

To every quadratic form $q:V\to F$ on a vector space $V$ over $F$ is associated a
symmetric bilinear form $\<x,y\>=\<x,y\>_q=q(x+y)-q(x)-q(y)$. The \emph{radical} of $q$ is
the subspace $V^\perp=\{x\in V\mid \<x,V\>_q=0\}$ of $V$.
The form $q$ is said to be \emph{non-degenerate} if either $V^\perp=0$ or
$\dim(V^\perp)=1$ and $q(V^\perp)\ne0$ (the latter case occurs only for $\chr F =2$). 
A non-zero element $v\in V$ is called \emph{isotropic} if $q(v)=0$ and
\emph{anisotropic} otherwise. The form $q$ is said to be \emph{isotropic}, respectively
\emph{anisotropic}, if $V$ contains, respectively does not contain, an isotropic
element. A subspace $U$ of $V$ such that $q(U)=0$ is said to be \emph{totally isotropic}.

A \emph{Hurwitz algebra} is an algebra $A$ possessing a non-degenerate quadratic form
$\n:A\to F$ satisfying $\n(ab)=\n(a)\n(b)$ for all $a,b\in A$. 
The form $\n$ is uniquely determined by the algebra structure of $A$, and every
non-zero algebra morphism between Hurwitz algebras is orthogonal. 
A Hurwitz algebra $A$ has zero-divisors if and only if its quadratic form $\n$ is
isotropic. If this is the case, $A$ is said to be \emph{split}. 
Over any ground field, there are precisely three isomorphism classes of split Hurwitz
algebra, one in each dimension $2$, $4$ and $8$, consecutively embedded into each other.   
The $4$-dimensional split Hurwitz $F$-algebra is the $2\times2$ matrix algebra over $F$. 
A Hurwitz algebra is commutative if and only if its dimension is at most $2$, and
associative if and only if its dimension is at most $4$.
All Hurwitz algebras are alternative. 
For further details, we refer to \cite[Chapter~VIII]{involutions}.

We denote by $H=F^{2\times2}$ the $4$-dimensional split Hurwitz algebra over $F$, and by
$U\subset H$ the $3$-dimensional subalgebra of upper triangular matrices. 

Given an involutive algebra $B$ and a non-zero scalar $\mu$, a new algebra $\V_{\mu}(B)$
is constructed as follows: as a vector space, $\V_{\mu}(B)=B\times B$, with multiplication
defined by 
$$(a,b)(c,d) = (ac +\mu\bar{d}b ,  da +  b\bar{c})$$
for all $a,b,c,d\in B$. 
The algebra $\V_{\mu}(B)$ is said to be obtained from $B$ by the 
\emph{Cayley--Dickson doubling process}. 
It is again involutive, with involution given by 
$\overline{(a,b)} = (\bar{a},-b)$, and flexible if and only if $B$ is flexible
\cite[Sec.~6]{mccrimmon85}.
Commonly, $B$ is identified with the subalgebra $B\times0$ of $\V_{\mu}(B)$, and setting
$\ell=(0,1)\in B\times B$ gives an orthogonal decomposition of vector spaces
$\V_{\mu}(B)=B\oplus B\ell$.
 
Starting from the algebra $F$ with trivial involution, one obtains a
class of flexible involutive algebras of dimensions $2^m$, $m\ge1$, by successive
application of the Cayley--Dickson construction. In this paper, we shall refer to the
algebras constructed in this way as \emph{Cayley--Dickson algebras}. 
When $\chr F\ne2$, the Cayley--Dickson algebras of dimension at most eight are precisely
the Hurwitz algebras over $F$ \cite{albert42d}. (Hurwitz algebras over fields of
  characteristic $2$ can also be constructed by the Cayley--Dickson process by starting
  from a so-called \emph{quadratic \'etale algebra}, but this construction will play no
  role in the present paper.)

The present article is organised as follows. In Section~\ref{summary}, we summarise our
main results. Section~\ref{preliminaries} contains some general background information and
lemmata, and Section~\ref{proofs} the proofs of the main results. Finally, in
Section~\ref{examples} we give some examples illuminating different aspects of the
theory. 

A note on conventions: we use words as \emph{non-commutative} and \emph{non-associative}
in the strict sense: a non-commutative algebra is one that does \emph{not} satisfy the
commutative law, etc.

\section{Summary of our results}
\label{summary}

Our first theorem, concerning alternative algebras, is rather straightforward from existing theory.

\begin{thm}[Propositions~\ref{alt1}--\ref{alt3}] \label{alt}
\rule{0pt}{0pt}
  \begin{enumerate}
  \item Every finite-dimensional alternative algbra is von-Neumann finite. 
    \label{alta}
  \item Every reversible alternative algebra is  von-Neumann finite. 
  \item A Hurwitz algebra $A$ is reversible if and only if either its quadratic form
    is anisotropic, or $\dim A\le2$. 
    \label{altd}
\end{enumerate}
\end{thm}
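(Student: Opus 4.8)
The plan is to treat the three parts of the statement — $\dim A\le 2$, $\n$ anisotropic, and the converse — separately, using in each case the explicit structure theory of Hurwitz algebras and the identities they satisfy. The key algebraic facts I would invoke are: every Hurwitz algebra $A$ is involutive (via its norm form, with $\bar a=\tr(a)1-a$), is alternative, and satisfies $a\bar a=\bar a a=\n(a)1$ together with $\n(ab)=\n(a)\n(b)$; moreover, since $A$ is alternative and hence every $2$-generated subalgebra is associative, I can freely compute inside such subalgebras as though in an associative ring.

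First I would dispose of the easy direction. If $\dim A\le 2$ then $A$ is commutative, hence trivially reversible. If $\n$ is anisotropic, then $A$ has no zero-divisors at all: $ab=0$ with $a\neq 0$ forces $\n(a)\n(b)=\n(ab)=0$, so $\n(b)=0$, so $b=0$; thus the reversibility condition is vacuous. This handles the "if" part of \eqref{altd}.

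For the converse, I must show that a Hurwitz algebra $A$ with $\dim A\ge 4$ and $\n$ isotropic is \emph{not} reversible. By the classification quoted in the introduction, such an $A$ is split, and contains (up to isomorphism) the $4$-dimensional split Hurwitz algebra $H=F^{2\times2}$ as a subalgebra. Since reversibility passes to subalgebras (if $ab=0$ in the subalgebra then $ba=0$ there too, as this is an identity-free condition involving only the two elements), it suffices to exhibit elements of $H=F^{2\times2}$ witnessing failure of reversibility — and indeed in the matrix ring $\smatr{0&1\\0&0}\smatr{0&1\\0&0}=0$ while $\smatr{0&1\\0&0}\smatr{0&1\\0&0}$ — more to the point, $\smatr{1&0\\0&0}\smatr{0&0\\0&1}=0$ but $\smatr{0&0\\0&1}\smatr{1&0\\0&0}$ is also $0$, so I should instead pick $a=\smatr{0&1\\0&0}$, $b=\smatr{1&0\\0&0}$, for which $ab=\smatr{0&0\\0&0}$ but $ba=\smatr{0&1\\0&0}\neq 0$. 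This shows $H$, hence $A$, is not reversible.

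The only genuine subtlety is making sure the reduction "reversibility descends to subalgebras" is legitimate and that the classification statement I am leaning on — every split Hurwitz algebra of dimension $\ge 4$ contains a copy of $F^{2\times2}$ — is exactly the one recorded in the introduction (the consecutive embedding of the $2$-, $4$-, and $8$-dimensional split Hurwitz algebras), so no separate argument is needed there. I expect the main obstacle to be purely expository: organising the case split cleanly and citing \cite[Chapter~VIII]{involutions} at precisely the right granularity, rather than any real computation, since each individual verification is a one-line matrix identity or a one-line norm-multiplicativity argument.
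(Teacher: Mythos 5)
Your argument for part \eqref{altd} is correct and is essentially the paper's own proof (Proposition~\ref{alt3}): the anisotropic case follows from multiplicativity of the norm (no zero divisors), the case $\dim A\le2$ from commutativity, and the converse from the fact that a Hurwitz algebra with isotropic norm and dimension at least $4$ is split and contains the split quaternion algebra $F^{2\times2}$, where reversibility visibly fails. Your witness $a=\smatr{0&1\\0&0}$, $b=\smatr{1&0\\0&0}$ with $ab=0$ and $ba=a\ne0$ is fine (the paper's Example~\ref{HU} uses $(1+v)u=0$ but $u(1+v)=2u\ne0$); the false starts in the middle of that paragraph should simply be deleted.

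The genuine gap is that the statement has three parts and you have proved only the third. Parts \eqref{alta} and (b) --- that finite-dimensional, respectively reversible, alternative algebras are von-Neumann finite --- are not consequences of \eqref{altd} and are not addressed anywhere in your proposal. The paper proves them together in Proposition~\ref{alt1} by reduction to $2$-generated subalgebras: by Lemma~\ref{vnfsubalg}, von-Neumann finiteness of $A$ follows once every subalgebra generated by two elements is von-Neumann finite; by Artin's theorem such subalgebras of an alternative algebra are associative; finite-dimensionality and reversibility both pass to subalgebras; and for associative algebras each hypothesis is known to imply von-Neumann finiteness (embedding into a matrix algebra in the finite-dimensional case, and the computation $b(ba-1)=0\Rightarrow b^2a=b\Rightarrow ba=abba=ab=1$ from the introduction in the reversible case). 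You mention the associativity of $2$-generated subalgebras only as a computational convenience inside Hurwitz algebras and never deploy it for (a) or (b), so as a proof of Theorem~\ref{alt} the proposal is incomplete.
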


Note that, by Theorem~\ref{alt}\eqref{alta}, all Hurwitz algebras are von-Neumann finite.

The next theorem summarises properties of algebras that are both flexible and quadratic. Such algebras are in particular involutive, by Lemma~\ref{condition}\eqref{invcondition} in the next section, and hence equipped with a norm.

\begin{thm}[Propositions~\ref{0div}, \ref{anisotropic} and \ref{algclosedrev}]
\label{quadflex}
  \rule{0pt}{0pt}
  \begin{enumerate}
  \item Every algebra without zero divisors, that is either flexible or quadratic, is
    von-Neumann finite.
    \label{quadflex1}
  \item Assume that $\chr F\ne2$, and let $A$ be flexible and quadratic. If the norm of
    $A$ is non-degenerate on every $3$-dimensional subalgebra of $A$, then $A$ is
    von-Neumann-finite and reversible. 
    \label{quadflex2}
  \item
      Assume that $F$ is algebraically closed, $\chr F\ne2$, and that $A$ is flexible
    and quadratic. 
    Then $A$ is reversible if and only if either of the following conditions holds:
    \begin{enumerate} \renewcommand{\theenumii}{\roman{enumii}}
    \item $A$ is commutative;
    \item $A=F1\oplus V$, where $V$ is an anti-commutative ideal in $A$, and the
      linear map $\lt_u:V\to V,\:v\mapsto u v$ is nilpotent for all $u\in V$.
    \end{enumerate}
      \label{quadflex3}
  \end{enumerate}
\end{thm}

In relation to Theorem~\ref{quadflex}\eqref{quadflex1}, note that every algebra without
zero divisors is reversible.
We also point out that Theorem~\ref{quadflex}\eqref{quadflex2} in particular applies to
every flexible quadratic $F$-algebra with anisotropic norm. 
This immediately implies the first first part of our next theorem.

\begin{thm}[Corollary~\ref{doubledcor}, Corollary~\ref{nonrevcd}]
  \label{doubled}
  Assume that $\chr F\ne2$.
\begin{enumerate}
\item All Cayley--Dickson algebras with anisotropic norm are von-Neumann finite and
  reversible.%
  \label{doubled1}
\item A Cayley--Dickson algebra with isotropic norm is reversible if and only if its
  dimension is at most two.%
  \label{doubled2}
\end{enumerate}
\end{thm}

It is not clear to the authors if there exist Cayley--Dickson algebras with isotropic norm that are not von-Neumann finite. By Theorem~\ref{inv}\eqref{inv1} below, such an algebra would need to have a $3$-dimensional subalgebra that is neither commutative nor associative.
We also remark that when $\chr F\ne2$, Theorem~\ref{alt}\eqref{altd} is a consequence of
Theorem~\ref{doubled}\eqref{doubled2}.

\begin{thm}[Propositions~\ref{invvnf} and \ref{invrev}] \label{inv}
  Let $A$ be an involutive algebra, and assume that $\chr F\ne2$.
  \begin{enumerate}
  \item The algebra $A$ is von-Neumann finite if and only if every $3$-dimensional
    subalgebra of $A$ is either commutative or associative. 
    \label{inv1}
  \item The algebra $A$ is reversible if and only if every $3$-dimensional
    subalgebra of $A$ is commutative.
    \label{inv2}
  \end{enumerate}
\end{thm}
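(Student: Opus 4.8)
The plan is to reduce both statements to a careful analysis of what can happen inside a $3$-dimensional subalgebra $B\subseteq A$ containing $1$, exploiting that the involution $\sigma$ restricts to $B$ and makes $B$ again an involutive algebra. The crucial structural observation is that for an involutive algebra, every element $a$ satisfies $a^2 - \tr(a)a + \n(a)1 = 0$ (multiply $a = \tr(a)1 - \bar a$ by $a$), so $A$ is automatically quadratic, and the trace form $T(a,b) = \tr(a\bar b)$ is a symmetric bilinear form that, since $\chr F\ne 2$, controls the geometry. First I would record the ``local-to-global'' principle: a unital algebra is von-Neumann finite (resp.\ reversible) if and only if every finitely generated — in fact, since everything is quadratic, every $\le 3$-dimensional — subalgebra is; this is because the equations $ab = 1, ba\ne 1$ (resp.\ $ab = 0, ba \ne 0$) live inside the subalgebra generated by $a, b$, and in a quadratic algebra that subalgebra is spanned by $1, a, b, ab, ba$, but one shows using the quadratic relations and flexibility-type identities available for involutive algebras that $\langle a,b\rangle$ is at most $4$-dimensional and the relevant obstruction already appears in a $3$-dimensional subalgebra. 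I expect this reduction — pinning down exactly why dimension $3$ suffices — to be the main obstacle, and it will rely on the classification of low-dimensional involutive algebras over a field of characteristic $\ne 2$.

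For the forward direction of \eqref{inv1}, I would argue contrapositively: if some $3$-dimensional subalgebra $B$ is neither commutative nor associative, exhibit $a,b\in B$ with $ab=1$ but $ba\ne 1$. The candidate is to use the norm form on $B$: if $B$ is non-commutative and non-associative, then (by the classification, or by a direct argument using that $\tr$ and $\n$ take values in $F1$) $B$ must be isomorphic to the upper-triangular algebra $U\subset H = F^{2\times 2}$ of the excerpt — the unique $3$-dimensional involutive algebra that fails both laws — and in $U$ one writes down an explicit non-invertible idempotent giving a one-sided unit. Conversely, if every $3$-dimensional subalgebra is commutative or associative, then for $ab=1$ I would restrict to $\langle a,b\rangle$; commutative forces $ba=ab=1$, and associative forces it by the classical associative argument (multiply $ab=1$ on suitable sides and use that $a,b$ lie in an associative, hence von-Neumann finite because finite-dimensional and PI, subalgebra — or invoke Theorem~\ref{alt}\eqref{alta} after noting finite-dimensionality of $\langle a,b\rangle$).

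For \eqref{inv2}, the same scheme applies with ``$=0$'' in place of ``$=1$''. If every $3$-dimensional subalgebra of $A$ is commutative, then $A$ itself is commutative (again since $A$ is quadratic, any two elements together with $1$ span at most a $3$-dimensional subalgebra), and a commutative ring is trivially reversible. For the converse, if some $3$-dimensional subalgebra $B$ is non-commutative, I would produce $a,b$ with $ab=0\ne ba$: take $a,b$ with $[a,b]\ne 0$ and adjust by the quadratic relations and the fact that $\n(a)\in F1$ — concretely, replacing $a$ by $a - \lambda 1$ to kill $\n(a)$ makes $a\bar a = 0$, and then $a$ and $\bar a$ (or a scalar shift thereof) furnish the desired one-sided zero-divisor pair unless $B$ happens to be commutative after all. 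Here I must be careful that the elements I construct actually lie in $B$ and that $\bar a \ne$ a scalar multiple making $ba$ vanish too; this case-check, organised by the isomorphism type of $B$, is where the bulk of the routine work sits, but it is exactly the $3$-dimensional classification that makes it finite and manageable.
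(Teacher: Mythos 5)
Your high-level plan (reduce to $3$-dimensional subalgebras via quadraticity, then analyse non-commutative $3$-dimensional involutive algebras) matches the paper's, but several of your key steps are wrong. First, the reduction itself is much simpler than you anticipate and does not need any classification or a bound on $\dim\langle a,b\rangle$: when $ab\in F$ (the only case relevant to $ab=1$ or $ab=0$), the span $B=\spann\{1,a,b\}$ is already closed under multiplication, because $a^2,b^2\in B$ by quadraticity and $ba=(a+b)^2-a^2-b^2-ab\in B$. Your weaker claim that \emph{any} two elements of a quadratic algebra generate a subalgebra of dimension at most $3$ (used to conclude that $A$ is commutative in part (2)) is false: the real quaternions are quadratic, have \emph{no} $3$-dimensional subalgebras at all (for $u,v\in\Im A$ spanning one, $u\ep v$ would have to lie in $\spann\{u,v\}\cap\spann\{u,v\}^\perp=0$, forcing $u,v$ dependent), yet are not commutative. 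So ``every $3$-dimensional subalgebra commutative $\Rightarrow A$ commutative'' is not a valid step, though the intended conclusion (reversibility) does follow by restricting to $\spann\{1,a,b\}$ when $ab=0$.

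Second, and more seriously, your treatment of both forward directions fails. For (1) you assert that $U$ is ``the unique $3$-dimensional involutive algebra that fails both laws'' and that it contains a one-sided unit that is not two-sided; but $U$ is a subalgebra of $F^{2\times2}$, hence associative, and being finite-dimensional associative it \emph{is} von-Neumann finite. The correct statement (which the paper proves by solving for the structure constants $(u,u)$, $(u,v)$, $(v,v)$ under the von-Neumann-finiteness constraint) is that a $3$-dimensional \emph{von-Neumann finite} non-commutative involutive algebra must be isomorphic to $U$; a non-commutative non-associative one is therefore not von-Neumann finite, and such algebras are not unique (see Example~\ref{flexquadnonvnf}). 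For (2), your proposed witness pair $a,\bar a$ with $\n(a)=0$ cannot work even in principle: in any involutive algebra $a\bar a=\bar a a=\n(a)$ (equation~\eqref{bareq}), so both products vanish simultaneously and the pair never detects a failure of reversibility. (Also, killing $\n(a)$ by a scalar shift requires $\tr(a)^2-4\n(a)$ to be a square in $F$, which need not hold.) The actual construction in the paper is more delicate: starting from a basis $u,v$ of $\Im B$ with $u\ep v=u$, one uses the reversibility hypothesis to force $(u,v)\ne0$, $(u,u)=0$ and $(v,v)\ne1$, and then exhibits the pair $(1,v)$ and $(-(u,v),\,(1-(v,v))u+(u,v)v)$ whose product vanishes in one order only. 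Without something of this kind, both ``only if'' directions remain unproved.
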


Actually, every non-commutative associative involutive algebra of dimension three over $F$ is isomorphic to the algebra $U$.

\section{Preliminaries}
\label{preliminaries}


First, observe that the classes of von-Neumann finite respectively reversible algebras are
closed under taking subalgebras.
Since matrix algebras are von-Neumann finite, and every finite-di\-men\-sion\-al
associative algebra can be embedded in a matrix algebra, we have the following (well
known) result.

\begin{lma}\label{fdass}
Every finite-dimensional associative $F$-algebra is von-Neumann finite. 
\end{lma}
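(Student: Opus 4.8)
The plan is to reduce to the fact, stated just above, that matrix algebras over $F$ are von-Neumann finite, together with the closure of the von-Neumann finite property under subalgebras. So the whole content is the classical embedding of a finite-dimensional associative unital $F$-algebra into a matrix algebra $F^{n\times n}$.

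\begin{proof}
Let $A$ be a finite-dimensional associative $F$-algebra, and put $n=\dim_F A$. Since $A$ is unital, the left regular representation $\rho\colon A\to\End_F(A)$, $\rho(a)(x)=ax$, is an injective algebra homomorphism: it is $F$-linear and multiplicative by associativity, and $\rho(a)=0$ forces $a=a1=\rho(a)(1)=0$. Choosing an $F$-basis of $A$ identifies $\End_F(A)$ with $F^{n\times n}$, so $A$ is isomorphic to a subalgebra of $F^{n\times n}$. As $F^{n\times n}$ is von-Neumann finite and subalgebras of von-Neumann finite algebras are von-Neumann finite (as noted at the start of this section), $A$ is von-Neumann finite.
\end{proof}

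There is essentially no obstacle here; the only point to be careful about is that the regular representation lands in the full endomorphism algebra of $A$ as an $n$-dimensional vector space, so that after a choice of basis one genuinely obtains an embedding into $F^{n\times n}$ rather than merely into some endomorphism ring of a possibly infinite-dimensional space (which need not be von-Neumann finite). Everything else is routine.
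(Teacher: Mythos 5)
Your proof is correct and follows exactly the route the paper takes: the paper justifies the lemma by the remark that every finite-dimensional associative algebra embeds in a matrix algebra (via the regular representation) and that matrix algebras are von-Neumann finite, using closure under subalgebras. You have merely written out the embedding explicitly, which is a fine elaboration of the same argument.
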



\begin{lma} \label{vnfsubalg} 
 The following statements are equivalent.
  \begin{enumerate} \renewcommand{\theenumi}{\roman{enumi}}
  \item The algebra $A$ is von-Neumann finite (respectively, reversible);%
    \label{vnfsubalg1} 
  \item every subalgebra of $A$ generated by at most two elements is von-Neumann finite
    (reversible);%
    \label{vnfsubalg2} 
  \item any two elements $a,b\in A$ satisfying $ab\in F\setminus\{0\}$ ($ab=0$) commute
    with each other.
    \label{vnfsubalg3} 
  \end{enumerate}
\end{lma}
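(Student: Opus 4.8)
The plan is to prove the cyclic chain of implications $(\ref{vnfsubalg1})\Rightarrow(\ref{vnfsubalg2})\Rightarrow(\ref{vnfsubalg3})\Rightarrow(\ref{vnfsubalg1})$, running the von-Neumann-finite version and the reversible version of each step in parallel, since the two arguments differ only in whether the relevant product equals $1$ or $0$.

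The step $(\ref{vnfsubalg1})\Rightarrow(\ref{vnfsubalg2})$ is immediate: it is exactly the observation recorded at the start of this section, that von-Neumann finiteness and reversibility pass to subalgebras. For $(\ref{vnfsubalg3})\Rightarrow(\ref{vnfsubalg1})$ I would note that the hypothesis $ab=1$ already forces $ab\in F\setminus\{0\}$ (and in the reversible case $ab=0$ is literally the assumption in $(\ref{vnfsubalg3})$), so $(\ref{vnfsubalg3})$ yields $ba=ab$, hence $ba=1$ (respectively $ba=0$), which is the required defining property.

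The only step with any content is $(\ref{vnfsubalg2})\Rightarrow(\ref{vnfsubalg3})$. Given $a,b\in A$, I would pass to $B=\langle a,b\rangle$, the unital subalgebra generated by $a$ and $b$; this is a subalgebra generated by at most two elements, so $(\ref{vnfsubalg2})$ applies to it. In the reversible case, $ab=0$ holds in $B$, hence $ba=0$ in $B$ and therefore in $A$. In the von-Neumann case, writing $\lambda=ab\in F\setminus\{0\}$, the element $\lambda\inv a$ lies in $B$ and is a left inverse of $b$ there, so von-Neumann finiteness of $B$ makes it a two-sided inverse: $\lambda\inv(ba)=b(\lambda\inv a)=1$, i.e. $ba=\lambda=ab$. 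In either case $a$ and $b$ commute.

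I do not expect a genuine obstacle. The two points to be slightly careful about are the normalisation by $\lambda\inv$ — which is precisely why condition $(\ref{vnfsubalg3})$ must be phrased with $ab\in F\setminus\{0\}$ rather than the more familiar $ab=1$ — and the convention that a subalgebra of our (unital) algebra is taken to be unital, so that von-Neumann finiteness even makes sense for $B$ and so that $1\in B$. Non-associativity plays no role, since every manipulation stays among the two given elements and products already named.
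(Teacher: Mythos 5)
Your proposal is correct and follows essentially the same route as the paper: the cyclic chain $(i)\Rightarrow(ii)\Rightarrow(iii)\Rightarrow(i)$, with the only substantive step being the normalisation $ab=\mu\in F\setminus\{0\}$ to $(\mu\inv a)b=1$ inside the subalgebra generated by the two elements. The remarks about unitality of subalgebras and the phrasing of condition (iii) match the paper's intent exactly.
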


\begin{proof}
If $A$ is von-Neumann finite then so is every subalgebra of $A$. Hence
\eqref{vnfsubalg1}$\Rightarrow$\eqref{vnfsubalg2}.
Assuming \eqref{vnfsubalg2}, and taking $a,b\in A$ such that $ab=\mu\in F\setminus\{0\}$,
we have $(\mu\inv a)b=1$. By assumption, the subalgebra generated by $\mu\inv a$ and $b$
is von-Neumann finite, and hence $b(\mu\inv a)=1 = (\mu\inv a)b$, implying that
$ab=ba$. This shows that \eqref{vnfsubalg2}$\Rightarrow$\eqref{vnfsubalg3}.
Finally, if \eqref{vnfsubalg3} holds and $ab=1$, then $ba=ab=1$, so $A$ is von-Neumann
finite. 

The reversible case is analogous.
\end{proof}

We now turn our attention to involutive and quadratic algebras. 

\begin{lma} \label{quadvnf}
  Let $A$ be a quadratic algebra.
  \begin{enumerate}
  \item Let $a,b\in A$ and $B=\spann\{1,a,b\}$. If  $ab\in B$ then $B$ is a subalgebra of
    $A$. 
    \label{quadvnf1}
  \item The algebra $A$ is von-Neumann finite (respectively, reversible) if and
    only if every $3$-dimensional subalgebra of $A$ is von-Neumann finite (reversible).
    \label{quadvnf2}
  \item The algebra $A$ is von-Neumann finite and reversible if and only if every
    $3$-dimensional subalgebra of $A$ is commutative.
    \label{quadvnf3}
  \end{enumerate}
\end{lma}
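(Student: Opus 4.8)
The plan is to treat the three parts in order, using each as a stepping stone for the next. For part \eqref{quadvnf1}, I would argue directly from the multiplication rule of a quadratic algebra: since $A$ is quadratic, the products $a^2$, $b^2$ lie in $\spann\{1,a\}\subseteq B$ and $\spann\{1,b\}\subseteq B$ respectively, by the linear dependence of $1,a,a^2$ (and of $1,b,b^2$). The remaining products to check are $ab$ and $ba$. By hypothesis $ab\in B$. For $ba$, I would use the quadratic identity in the form $xy+yx=\tr(x)y+\tr(y)x-(\ ?\ )$ — more precisely, linearising $z^2=\tr(z)z-\n(z)$ at $z=a+b$ gives $ab+ba=\tr(a)b+\tr(b)a+(\tr(a+b)-\tr(a)-\tr(b))\cdot 1 - \text{(scalar)}\cdot 1 + \ldots$, so that $ab+ba\in\spann\{1,a,b\}=B$; hence $ba=(ab+ba)-ab\in B$ as well. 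Since $1\in B$ and all four generating products $a^2,b^2,ab,ba$ lie in $B$, the subspace $B$ is closed under multiplication, i.e.\ a subalgebra.

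For part \eqref{quadvnf2}, one implication is immediate from the remark preceding Lemma~\ref{fdass} (von-Neumann finiteness and reversibility pass to subalgebras). For the converse, suppose every $3$-dimensional subalgebra of $A$ is von-Neumann finite (resp.\ reversible); I would invoke Lemma~\ref{vnfsubalg}, specifically the equivalence with condition \eqref{vnfsubalg3}. Take $a,b\in A$ with $ab\in F\setminus\{0\}$ (resp.\ $ab=0$); in either case $ab\in F1\subseteq\spann\{1,a,b\}$, so by part \eqref{quadvnf1} the subspace $B=\spann\{1,a,b\}$ is a subalgebra of $A$, of dimension at most $3$. If $\dim B<3$ it embeds in a $3$-dimensional subalgebra (extend a basis; any superspace of dimension $3$ on which we check closure — but more cleanly, note that subalgebras of dimension $\le 2$ are commutative since they are spanned by $1$ and one further element, hence $ab=ba$ trivially). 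If $\dim B=3$, then $B$ is von-Neumann finite (resp.\ reversible) by hypothesis, and again $ab=ba$. Thus condition \eqref{vnfsubalg3} of Lemma~\ref{vnfsubalg} holds for $A$, so $A$ is von-Neumann finite (resp.\ reversible).

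For part \eqref{quadvnf3}: if every $3$-dimensional subalgebra is commutative, then in particular each is von-Neumann finite and reversible (commutative unital rings are both), so $A$ is von-Neumann finite and reversible by part \eqref{quadvnf2}. Conversely, suppose $A$ is both von-Neumann finite and reversible, and let $B$ be a $3$-dimensional subalgebra. Then $B$ inherits both properties. I would show a unital algebra that is both von-Neumann finite and reversible and of the form $\spann\{1,a,b\}$ with $a,b$ anticommuting "imaginary" parts must in fact be commutative: writing $B=F1\oplus\Im B$ via the Osborn decomposition, for $u,v\in\Im B$ we have $uv=(u,v)+u\ep v$; reversibility applied to suitable products of imaginary elements forces the anticommutative part $u\ep v$ to vanish. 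Concretely, it suffices to show $u\ep v=0$ for all $u,v\in\Im B$, and here I expect the main obstacle: one must cleverly exploit that $ab=0\Rightarrow ba=0$ and $ab\in F^\times\Rightarrow ab=ba$ for elements built from $u,v$ to kill the product $u\ep v$, possibly passing to the $2$- or $3$-dimensional subalgebras generated by pairs of imaginary elements and arguing that a genuinely non-commutative such algebra would violate one of the two hypotheses. The delicate point is handling the case where $(u,v)\ne 0$ (so $uv\notin\Im B$ is a nonzero scalar plus an imaginary part), where reversibility is not directly applicable and one needs von-Neumann finiteness together with a rescaling; I would reduce, via Lemma~\ref{vnfsubalg}, to two-generated subalgebras and analyse them by the quadratic multiplication formula \eqref{quadmult}.
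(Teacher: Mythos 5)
Your parts \eqref{quadvnf1} and \eqref{quadvnf2} are essentially the paper's own argument. For \eqref{quadvnf1} the paper simply writes $ba=(a+b)^2-a^2-b^2-ab\in B$, using $(a+b)^2\in\spann\{1,a+b\}\subset B$; this is the clean, characteristic-free form of the linearisation you attempt (your version phrased with $\tr$ and $\n$ is garbled, with unresolved terms, but the intended conclusion $ab+ba\in B$ is correct). Part \eqref{quadvnf2} matches the paper step for step, including the dichotomy $\dim B<3$ (commutative) versus $\dim B=3$ (hypothesis).

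Part \eqref{quadvnf3} has a genuine gap. The forward direction is fine, but for the converse you only describe a strategy --- ``reversibility applied to suitable products of imaginary elements forces $u\ep v$ to vanish'' --- and you explicitly concede that you do not know how to handle the case $(u,v)\ne0$. That case is exactly where the content of the statement lies, so the proof is incomplete. In addition, your route through the decomposition $B=F1\oplus\Im B$ rests on Osborn's theory, which requires $\chr F\ne2$, an assumption the lemma does not make. The paper avoids both problems with one short computation that never mentions $\Im B$: writing $ab=\a+\b a+\gamma b$ (possible since $B=\spann\{1,a,b\}$ is a subalgebra by part \eqref{quadvnf1}), one gets
$$(a-\gamma)(b-\b)=ab-\b a-\gamma b+\b\gamma=\a+\b\gamma\in F.$$
If this scalar is non-zero, von-Neumann finiteness (via Lemma~\ref{vnfsubalg}) forces $a-\gamma$ and $b-\b$ to commute; if it is zero, reversibility does. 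Either way $ab-ba=(a-\gamma)(b-\b)-(b-\b)(a-\gamma)=0$, so $B$ is commutative. Note that this dichotomy is precisely why \emph{both} hypotheses are needed in the converse --- a point your sketch does not isolate.
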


\begin{proof}
\eqref{quadvnf1}
Since $A$ is quadratic, $a^2\in\spann\{1,a\}\subset B$, and similarly $b^2,(a+b)^2\in B$. 
Moreover, $ba = (a+b)^2-a^2-b^2-ab\in B$, so $B\subset A$ is a subalgebra.

\eqref{quadvnf2}
The ``only if'' part is immediate.
For the converse, assume that every $3$-dimensional subalgebra of $A$ is von-Neumann
finite, and let $a,b\in A$ such that $ab=1$. By \eqref{quadvnf1},
$B=\spann\{1,a,b\}$ is a subalgebra of $A$ of dimension at most $3$. If $\dim B<3$ then
$B$ is commutative and thus von-Neumann finite, and if $\dim B=3$ then $B$ is von-Neumann
finite by assumption. Hence, $ba=1$, and so $A$ is von-Neumann finite. 
Reversibility is proved analogously. 

\eqref{quadvnf3}
In view of \eqref{quadvnf2}, we need to show that a $3$-dimensional quadratic algebra is
von-Neumann finite and reversible if and only if it is commutative. Clearly, any
commutative algebra is both von-Neumann finite and reversible. Let $B=\spann\{1,a,b\}$ be
a von-Neumann-finite reversible quadratic algebra, and $ab=\a+\b a+\gamma b$, for some
$\a,\b,\gamma\in F$. Then
$$(a-\gamma)(b-\b) = ab - \b a -\gamma b +\b\gamma= \a +\b\gamma \in F,$$
implying that $(a-\gamma)(b-\b)=(b-\b)(a-\gamma)$ since $B$ is von-Neumann finite and
reversible.
Now $0=(a-\gamma)(b-\b)-(b-\b)(a-\gamma)= ab-ba$, so $ab=ba$ 
and $B=\spann\{1,a,b\}$ is commutative. 
\end{proof}

Note that in an involutive algebra, the identity $\n(a)=a\bar{a}=\bar{a}a$ holds:
as $\tr(a)\in F$ it follows that $\tr(a)a=a\tr(a)$, and thus
\begin{equation} \label{bareq}
  a\bar{a} = a(\tr(a)-a)=\tr(a)a-a^2 =(\tr(a)-a)a= \bar{a}a \,.
\end{equation}
The equation \eqref{bareq} also implies that $a^2-\tr(a)a+\n(a)=0$ so, in particular,
every involutive algebra is quadratic. 

In the paper \cite{osborn62}, Osborn developed the fundamentals of a theory for
quadratic algebras over fields of characteristic different from $2$. For such an algebra
$A$, set $$\Im A = \{u\in A\setminus F1 \mid u^2\in F1\}\cup\{0\}\,.$$
Then $A$ decomposes as a vector space as $A=F\oplus\Im A = F1\oplus\Im A$.
This decomposition defines an $F$-bilinear form $(\cdot,\cdot)$ and an anti-commutative
multiplication $\ep $ on $\Im A$, by the formula 
$uv=(u,v) + u\ep  v\in F\oplus \Im A$ for all $u,v\in\Im A$. 
Multiplication in $A=F\oplus \Im A$ can now be written as
\begin{equation} \label{quadmult}
 (\a,u)(\b,v) = (\a\b + (u,v),\,\a v+\b u +u\ep  v)\,,
\end{equation}
for $\a,\b\in F$, $u,v\in\Im A$.
A linear map $\s: A\to A,\: a\mapsto\bar{a}$ is defined by $\s(\a,u) =(\a,-u)$ and,
similarly to the involutive case, we have $\tr(a) = a+\bar{a}=2\a\in F$,
$\n(a)=a\bar{a}=\bar{a}a=\a^2-(u,u)\in F$ and $a^2-\tr(a)a+\n(a)=0$ for $a=(\a,u)\in A$.
Moreover, $(u,v)=\frac{1}{2}\tr(uv)$ 
for all $u,v\in\Im(A)$.

\begin{lma}[Osborn {\cite[p~203]{osborn62}}] Let $A$ be a quadratic algebra, and  $\chr F\ne 2$.
  \begin{enumerate} \label{condition}
  \item The algebra $A$ is involutive 
    if and only if $(\cdot,\cdot)$ is symmetric.
        \label{invcondition}
  \item 
    The algebra $A$ is flexible if and only if the bilinear form $(\cdot,\cdot)$ is
    symmetric and $(u,u\ep  v)=0$ holds for all $u,v\in\Im A$.
    \label{flexcondition}
  \end{enumerate}
In particular, every flexible quadratic $F$-algebra is involutive. 
\end{lma}

For $A$ quadratic and $\chr F\ne2$, 
we denote by $\lt_u^{\ep}$ the linear map $\Im A\to \Im A,\:v\mapsto u\ep v$.

\begin{lma} \label{3dimcommsubalg}
Let $A$ be a quadratic $F$-algebra, and $\chr F \ne2$.
\begin{enumerate}
\item 
  Every  $3$-dimensional subalgebra of $A$ is commutative if and only if the condition
  $u\ep  v\in\spann\{u,v\}$ implies $(u,v)=(v,u)$ and  $u\ep v =0$ for all
 $u,v\in \Im A$.%
  \label{3dcsaa}
\item 
  Assume additionally that $F$ is algebraically closed, and $A$ involutive. Then every $3$-dimensional subalgebra of $A$ is commutative if and only if the map $\lt_u^{\ep}$ is nilpotent for all $u,v\in\Im A$.
  \label{3dcsab}
\end{enumerate}
\end{lma}

\begin{proof}
\eqref{3dcsaa}
For any $u,v\in\Im A$, the subspace $B=\spann\{1,u,v\}\subset A$ is a subalgebra if and
only if $u\ep v\in\spann\{u,v\}$. On the other hand, 
$uv-vu= ((u,v)- (v,u) , 2u\ep v)\in F\oplus\Im A$. 
Consequently, $B$ is commutative if and only if $(u,v)=(v,u)$ and $u\ep v =0$.

\eqref{3dcsab}
Since $A$ is involutive, the bilinear form $(\cdot,\cdot)$ is symmetric by
Lemma~\ref{condition}\eqref{invcondition}.
Moreover, as the product $\ep$ on $\Im A$ is anti-commutative, the condition
$u,v\in\spann\{u,v\}\:\Rightarrow\: u\ep v=0$ for all $u,v\in\Im A$ is equivalent to that,
for all $u\in\Im A$, the map $\lt_u^{\ep}:\Im A\to\Im A$ has no non-zero eigenvalues.
By the Jordan normal form theorem, this is equivalent to $\lt_u^{\ep}$ being nilpotent for
all $u\in \Im A$. \
Hence, the result follows from Lemma~\ref{3dimcommsubalg}\eqref{3dcsaa}.
\end{proof}

For the case $\chr F\ne2$, we shall need the following fact about Cayley--Dickson algebras, which is a consequence of the fact that their norms are \emph{Pfister forms} \cite[Chapter~X]{lam05}.
\begin{lma}[{\cite[Theorem~X.1.7]{lam05}}] \label{hyperbolic}
  Assume that $\chr F\ne2$ and let $A$ be a Cayley--Dickson algebra of even dimension $2m$, with isotropic norm. 
  Then $A$ has a totally isotropic subspace of dimension $m$. 
\end{lma}

\section{Proofs of our results}
\label{proofs}

\subsection{Proof of Theorem~\ref{alt}}

\begin{prop} \label{alt1}
Every alternative algebra that is either finite dimensional or
reversible is von-Neumann finite. 
\end{prop}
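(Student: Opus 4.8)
The plan is to handle the two cases (finite-dimensional, reversible) separately, reducing each to a situation where associativity can be invoked. By Lemma~\ref{vnfsubalg}, it suffices to show that whenever $a,b\in A$ satisfy $ab=1$, the elements $a$ and $b$ commute; equivalently, that the subalgebra $B$ generated by $a$ and $b$ is von-Neumann finite. The key structural fact to exploit is that in an alternative algebra every subalgebra generated by \emph{two} elements is associative \cite[Theorem~3.1]{schafer95}. So $B$ is an associative algebra, and the problem collapses to showing that this associative $B$ is von-Neumann finite.

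For the finite-dimensional case, $B$ is a finite-dimensional associative $F$-algebra, hence von-Neumann finite by Lemma~\ref{fdass}; thus $ba=1$ as well, and $A$ is von-Neumann finite. For the reversible case, note first that reversibility passes to subalgebras, so $B$ is a reversible associative algebra. Then I would run the short argument already recorded in the introduction for associative reversible rings: from $ab=1$ one has $(ba-1)b=0$, whence $b(ba-1)=0$ by reversibility, so $b^2a=b$, and therefore $ba=(ab)(ba)=a(b^2a)=ab=1$ — all these manipulations being legitimate since they take place inside the associative algebra $B$. Hence $A$ is von-Neumann finite.

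The only point requiring a little care is the reduction itself: one must be sure that the ambient relation $ab=1$ really does place us inside a $2$-generated, hence associative, subalgebra, and that "von-Neumann finite" is genuinely a statement about such a subalgebra — both of which are furnished by Lemma~\ref{vnfsubalg}. I do not anticipate a serious obstacle; the proposition is essentially a packaging of the two-generated-subalgebra-is-associative theorem together with the two elementary facts about associative rings (finite-dimensional $\Rightarrow$ von-Neumann finite, reversible $\Rightarrow$ von-Neumann finite). If anything, the mild subtlety is purely expository: making clear that "reversible" in the hypothesis is used only through its inheritance by the subalgebra $B$.
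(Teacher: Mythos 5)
Your proof is correct and follows essentially the same route as the paper's: reduce to two-generated subalgebras via Lemma~\ref{vnfsubalg}, note that these are associative by the alternativity hypothesis, and then invoke Lemma~\ref{fdass} in the finite-dimensional case and the introduction's argument for reversible associative rings in the other. If anything, you make explicit the appeal to associativity of two-generated subalgebras, which the paper's proof leaves implicit.
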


\begin{proof}
Let $A$ be an alternative algebra, and $B\subset A$ a subalgebra generated by at
most two elements. If $A$ is finite dimensional or reversible then so is $B$. If $B$ is
finite dimensional then it is von-Neumann finite by Lemma~\ref{fdass}; if it is reversible
then the same conclusion follows from the argument given in the introduction.
Thus, by Lemma~\ref{vnfsubalg}, the algebra $A$ is von-Neumann finite.
\end{proof}

\begin{prop} \label{alt3}
Every Hurwitz algebra with anisotropic norm is reversible. A split Hurwitz algebra is
reversible if and only if its dimension is less than or equal to $2$. 
\end{prop}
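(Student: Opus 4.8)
The plan is to treat the two assertions separately. For the first, I would exploit that in a Hurwitz algebra the norm $\n$ is multiplicative, $\n(ab)=\n(a)\n(b)$, and that anisotropy means $\n(x)=0$ only for $x=0$. Hence if $ab=0$ then $\n(a)\n(b)=\n(ab)=\n(0)=0$, and since $F$ is a field this forces $\n(a)=0$ or $\n(b)=0$, i.e. $a=0$ or $b=0$; in either case $ba=0$. So $A$ has no zero divisors, and an algebra without zero divisors is trivially reversible. (This works in every characteristic, since multiplicativity of $\n$ does.)

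For the second assertion, I would invoke the classification recalled in the introduction: up to isomorphism there are exactly three split Hurwitz $F$-algebras, of dimensions $2$, $4$ and $8$, consecutively embedded into one another, so it suffices to settle these three cases. The $2$-dimensional one has dimension $\le 2$, hence is commutative and therefore reversible. For the $4$-dimensional one, $H=F^{2\times2}$, I would exhibit an explicit failure of reversibility via the matrix units: $E_{12}E_{11}=0$ while $E_{11}E_{12}=E_{12}\neq0$, a computation valid over any field, so $H$ is not reversible. Finally, the $8$-dimensional split Hurwitz algebra contains $H$ as a subalgebra, and reversibility passes to subalgebras (as noted at the start of Section~\ref{preliminaries}), so it cannot be reversible either. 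Combining the three cases gives the stated equivalence: a split Hurwitz algebra is reversible precisely when its dimension is at most $2$.

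The step requiring the most care — rather than being genuinely hard — is making the case analysis exhaustive and the reduction legitimate: one must be sure that exactly the dimensions $2$, $4$, $8$ occur among split Hurwitz algebras, and that the $4$-dimensional split algebra sits inside the $8$-dimensional one \emph{as a subalgebra}, so that non-reversibility is inherited. The norm-multiplicativity argument and the two-by-two matrix computation are each essentially one line.
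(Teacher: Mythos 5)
Your proposal is correct and follows essentially the same route as the paper: anisotropic norm gives no zero divisors (hence reversibility), the $2$-dimensional split case is commutative, and the higher-dimensional split cases fail because they contain the $2\times2$ matrix algebra, to which reversibility would have to descend. The only cosmetic difference is that you exhibit the non-reversibility of $F^{2\times2}$ via $E_{12}E_{11}=0\ne E_{11}E_{12}$, whereas the paper defers this to an explicit example elsewhere in the text.
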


\begin{proof}
Hurwitz algebras with anisotropic norm have no zero divisors, and Hurwitz algebras of
dimension at most $2$ are commutative. Hence, all algebras of either type are reversible.
If $A$ is a split Hurwitz algebra of dimension at least $4$, then it contains a split
quaternion subalgebra, that is, an algebra isomorphic to $H$. Since $H$ is not reversible,
neither is $A$ in this case.
\end{proof}

\subsection{Proof of Theorem~\ref{quadflex}(\ref{quadflex1},\ref{quadflex2})}

\begin{prop} \label{0div}
  Every algebra without zero divisors, that is either flexible or quadratic, is
  von-Neumann finite.
\end{prop}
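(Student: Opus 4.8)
The plan is to handle the flexible and the quadratic cases separately. In both, the starting observation is that $ab=1$ forces $a\neq0$ (and $b\neq0$), so that the hypothesis ``no zero divisors'' allows me to cancel $a$ on the left: an identity of the form $ac=ad$ will immediately give $c=d$. The goal in each case is therefore to manoeuvre the relation $ab=1$ into such a shape, with $ba$ appearing on one side.

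For the flexible case I would multiply $ab=1$ on the right by $a$ and apply the flexible law to the triple $a,b,a$: on the one hand $(ab)a=1\cdot a=a$, on the other hand $(ab)a=a(ba)$, so $a(ba)=a=a\cdot1$, that is, $a(ba-1)=0$. Since $a\neq0$ and $A$ has no zero divisors, $ba=1$. This part is essentially a one-liner.

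For the quadratic case the idea is to show that $a$ is itself two-sided invertible and then identify its inverse with $b$. Since $1,a,a^2$ are linearly dependent, either $a\in F1$ --- and then $a=\alpha1$ with $0\neq\alpha\in F$ because $ab=1$, so $b=\alpha^{-1}1$ and $ba=1$ at once --- or $a$ satisfies a monic relation $a^2=\lambda a+\mu1$ with $\lambda,\mu\in F$. In the latter case, if $\mu=0$ then $a(a-\lambda1)=0$ forces $a=\lambda1$, returning to the previous situation; so one may assume $\mu\neq0$, and then $c:=\mu^{-1}(a-\lambda1)$ satisfies $ac=ca=\mu^{-1}(a^2-\lambda a)=1$. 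From $ab=1=ac$ I cancel $a$ on the left to obtain $b=c$, hence $ba=ca=1$.

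The one point requiring a little care is that no restriction is placed on $\chr F$, so the quadratic case must be argued directly from the quadratic relation satisfied by $a$ rather than through Osborn's decomposition $A=F1\oplus\Im A$, which is available only when $\chr F\neq2$. Once this is borne in mind, the case analysis above is entirely elementary, and I do not foresee any real obstacle; alternatively one could invoke Lemma~\ref{quadvnf} to reduce to three-dimensional subalgebras first, but the direct argument seems shorter.
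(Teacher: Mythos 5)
Your proposal is correct and follows essentially the same route as the paper: the flexible case is word-for-word the paper's argument, and in the quadratic case the paper likewise produces a two-sided inverse $\tilde{a}\in\spann\{1,a\}$ (by observing that $\spann\{1,a\}$ is a field) and cancels $a$ against $a(b-\tilde{a})=0$; your version merely makes the inverse explicit from the relation $a^2=\lambda a+\mu 1$. Your remark that the argument must avoid Osborn's decomposition (and hence any assumption on $\chr F$) is well taken and consistent with the paper's proof.
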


\begin{proof}
  Let $A$ be an algebra without zero divisors. 
  If $A$ is flexible and $ab=1$ then $a=(ab)a=a(ba)$, whence $a(1-ba)=0$ and $ba=1$.
  If  $A$ is quadratic then $\spann\{1,a\}$ is a field for every $a\in A$, and hence there
  exists an element $\tilde{a}\in\spann\{1,a\}$ such that $a\tilde{a}=\tilde{a}a=1$. Now,
  if $ab=1$ then $a(b-\tilde{a})= ab -a\tilde{a} = 1-1=0$, implying that $b=\tilde{a}$ and
  hence $ba=1$.
\end{proof}

For the remainder of Section~\ref{proofs}, we assume that $\chr F\ne 2$. 

\begin{prop} \label{anisotropic}
Let $A$ be a flexible quadratic $F$-algebra. If the norm $\n$ of
$A$ is non-degenerate on every $3$-dimensional subalgebra of $A$, then $A$ is von-Neumann
finite and reversible.
\end{prop}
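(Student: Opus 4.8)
The plan is to reduce, via Lemma~\ref{quadvnf}\eqref{quadvnf3} and Remark~\ref{3dimcommsubalg}, to a purely computational statement about the anti-commutative product $\ep$ on $\Im A$: it suffices to show that whenever $u,v\in\Im A$ are such that $u,v,u\ep v$ are linearly dependent, then in fact $u\ep v=0$ (symmetry of $(\cdot,\cdot)$ being automatic for a flexible quadratic algebra by Lemma~\ref{condition}\eqref{flexcondition}). So the whole proposition turns on one implication concerning the structure of $\ep$ on the non-degeneracy hypothesis.

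First I would set up the situation: take $u,v\in\Im A$ with $u,v,u\ep v$ linearly dependent, and let $B=\spann\{1,u,v\}$, which by Remark~\ref{3dimcommsubalg} (using $u\ep v\in\spann\{u,v\}$) is a subalgebra. If $\dim B<3$ there is nothing to prove, so assume $\{1,u,v\}$ is a basis. Write $u\ep v=\lambda u+\mu v$ for scalars $\lambda,\mu$. Now I would use the flexibility identities from Lemma~\ref{condition}\eqref{flexcondition}, namely $(w,w\ep w')=0$ for all $w,w'\in\Im A$, applied to the pairs $(u,v)$ and $(v,u)$: these give $(u,u\ep v)=0$ and $(v,u\ep v)=0$, i.e. $\lambda(u,u)+\mu(u,v)=0$ and $\lambda(u,v)+\mu(v,v)=0$. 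Writing $S$ for the Gram matrix $\smatr{(u,u) & (u,v)\\ (u,v) & (v,v)}$ of the form $(\cdot,\cdot)$ restricted to $\spann\{u,v\}$, this says $S\smatr{\lambda\\\mu}=0$.

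Next I would identify the restriction of the norm form $\n$ to $B$ with (a scalar multiple of, and an orthogonal sum involving) the form $(\cdot,\cdot)$ on $\spann\{u,v\}$: for $a=(\alpha,w)\in B$ one has $\n(a)=\alpha^2-(w,w)$, so $\Im B=\spann\{u,v\}$ is the orthogonal complement of $1$, and the bilinear form associated to $\n$ restricted to $\Im B$ is $-2(\cdot,\cdot)$. Hence non-degeneracy of $\n$ on $B$ forces $(\cdot,\cdot)$ to be non-degenerate on $\spann\{u,v\}$ (the radical of $\n|_B$ would otherwise be at least the radical of $(\cdot,\cdot)|_{\spann\{u,v\}}$, and a one-dimensional radical on which $\n$ vanishes is impossible here since $\n(1)=1\ne0$ and the radical would have to contain an imaginary vector of zero norm — I would state this carefully). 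Thus $S$ is invertible, so $\smatr{\lambda\\\mu}=0$, i.e. $u\ep v=0$, which is exactly what Remark~\ref{3dimcommsubalg} requires; combined with symmetry of $(\cdot,\cdot)$, every $3$-dimensional subalgebra of $A$ is commutative, and Lemma~\ref{quadvnf}\eqref{quadvnf3} finishes the proof.

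The main obstacle I anticipate is the bookkeeping around non-degeneracy of a quadratic form versus its associated bilinear form in relation to the restriction to a subspace: the paper's definition allows a one-dimensional radical provided $\n$ is nonzero there, so I must rule this out on the $3$-dimensional subalgebra $B$ and correctly transfer non-degeneracy of $\n|_B$ to non-degeneracy of $(\cdot,\cdot)$ on $\Im B$. The point is that $1\in B$ is orthogonal to $\Im B$ and $\n(1)=1$, so the radical of $\n|_B$ sits inside $\Im B$ and $\n$ cannot be nonzero on a vector of $\Im B$ lying in that radical without contradicting $\n(a)=\alpha^2-(w,w)$ together with the radical condition; once this is pinned down, everything else is the short linear-algebra computation above.
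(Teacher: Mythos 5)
Your argument is correct and is essentially the paper's own proof: both reduce via Lemma~\ref{quadvnf}\eqref{quadvnf3} and Remark~\ref{3dimcommsubalg} to showing $u\ep v=0$, and both deduce this from the flexibility identities $(u,u\ep v)=(v,u\ep v)=0$ together with non-degeneracy of the form on the subalgebra; your Gram-matrix computation is just the coordinate form of the paper's one-line observation that $u\ep v\in\Im B\cap(\Im B)^\perp=\{0\}$. Your careful handling of the radical (noting $1\perp\Im B$ and that any radical vector of $(\cdot,\cdot)|_{\Im B}$ has zero norm, contradicting the definition of non-degeneracy) is in fact spelled out more explicitly than in the paper.
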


\begin{proof}
Any $3$-dimensional subalgebra $B\subset A$ can be written as $B=\spann\{1,u,v\}$, where $u,v\in\Im A$ are anisotropic vectors. Since $A$ is flexible, the bilinear form $(\cdot,\cdot)$ on $\Im A$ is symmetric by Lemma~\ref{condition}\eqref{flexcondition},
moreover, $\n(w)=-(w,w)$ and $\<w,z\>_{\n}=-2(w,z)$ for all $w,z\in \Im A$.
Again by Lemma~\ref{condition}\eqref{flexcondition}, we have $(u,u\ep v) = (v,u\ep v) =0$ implying that $\<u,u\ep v\> = \<v,u\ep v\>=0$ and,
consequently, $u\ep v\in \Im B\cap (\Im B)^\perp =0$. Thus
$B$ is commutative, by Lemma~\ref{3dimcommsubalg}\eqref{3dcsaa}.
Lemma~\ref{quadvnf}\eqref{quadvnf3} gives the result.
\end{proof}

\subsection{Proof of Theorem~\ref{doubled}}

Since Cayley--Dickson algebras are flexible and quadratic, Proposition~\ref{anisotropic}
implies the following result.

\begin{cor} \label{doubledcor}
All Cayley--Dickson algebras with anisotropic norm are von-Neumann finite and reversible.
\end{cor}

Theorem~\ref{doubled}\eqref{doubled2} is a consequence of the following proposition.

\begin{prop} \label{splitcd}
  Let $A$ be a Cayley--Dickson algebra of dimension at least $4$ with isotropic norm. Then
  $A$ has a subalgebra isomorphic to the split quaternion algebra $H$.
\end{prop}

\begin{proof}
By construction, the algebra $A$ contains a subalgebra of the form
$B'=\V_{\mu}(B)=B\oplus B\ell$, where $B$ is a Cayley--Dickson algebra with anisotropic
norm, $\mu\in F\setminus\{0\}$, and the norm of $B'$ is isotropic. 
Set $m=\dim B$. If $m=1$ then $B'\simeq F\times F$ and, since $\dim A\ge4$, there exists a
subalgebra of $A$ of the form $\V_{\nu}(B')\simeq \V_{\nu}(F\times F)\simeq H$,
$\nu\in F\setminus0$.

Assume instead that $m\ge2$. 
By Lemma~\ref{hyperbolic}, the $(2m)$-dimensional quadratic space $B'$ contains a
totally isotropic subspace $U$ of dimension $m$. 
It follows that the $(m+1)$-dimensional subspace $B\oplus F\ell$ of $B'$ intersects $U$
non-trivially, and hence $B\oplus F\ell$ contains an isotropic vector $v$. 
Let $b\in B$ be such that $v-b\in F\ell$, and let $C\subset B$ be a $2$-dimensional
subalgebra containing $b$. 
Then $C'=C\oplus C\ell\subset B'$ is a subalgebra of dimension
$4$, and $C'\simeq\V_{\mu}(C)$. Hence $C'$ is a quaternion subalgebra of $A$, and since
$v\in C'$ is an isotropic element, it follows that $C'\simeq H$. 
\end{proof}

\begin{cor} \label{nonrevcd}
A Cayley--Dickson algebra with isotropic norm is reversible 
if and only if its dimension is at most two.
\end{cor}

\subsection{Proof of Theorem~\ref{quadflex}\eqref{quadflex3} and Theorem~\ref{inv}}

We start by proving Theorem~\ref{inv}.

\begin{lma} \label{vnfcomm}
  Let $A$ be an involutive algebra.
  \begin{enumerate}
  \item The identity $\overline{ab} -ba =\tr(a)\tr(b) - \tr(a)b - \tr(b)a$ holds for all
    $a,b\in A$. 
    \label{vnfcomm1}
  \item The algebra $A$ is von-Neumann finite if and only if the condition 
    $ab\in F\setminus\{0\}$ implies that either $a,b\in\Im A$ or $1,a,b$ are linearly
    dependent. 
    \label{vnfcomm2}
  \item The algebra $A$ is reversible if and only if $ab=0$ implies that either 
    $a,b\in\Im A$ or $1,a,b$ are linearly dependent. 
\label{vnfcomm3}
  \end{enumerate}
\end{lma}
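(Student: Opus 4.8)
The plan is as follows. Part~\eqref{vnfcomm1} is a purely formal computation: since $c+\bar c=\tr(c)\in F1$, hence $\bar c=\tr(c)1-c$, for every $c\in A$, and since $\s$ is an anti-automorphism,
$$\overline{ab}=\bar b\,\bar a=\bigl(\tr(b)1-b\bigr)\bigl(\tr(a)1-a\bigr)=\tr(a)\tr(b)1-\tr(b)a-\tr(a)b+ba,$$
which rearranges to the asserted identity. (No restriction on $\chr F$ is needed here.)

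For parts \eqref{vnfcomm2} and \eqref{vnfcomm3} I would reduce everything to a single pointwise statement. By Lemma~\ref{vnfsubalg}\eqref{vnfsubalg3}, $A$ is von-Neumann finite (respectively, reversible) exactly when $ab=ba$ for every pair $a,b\in A$ with $ab\in F\setminus\{0\}$ (respectively, with $ab=0$). Fix such a pair; then $ab\in F1$, and since $\s$ fixes $F1$ pointwise (immediate from $\bar c=\tr(c)1-c$ and $\tr(\l1)=2\l$), we have $\overline{ab}=ab$, so substituting into \eqref{vnfcomm1} gives
$$ab-ba=\tr(a)\tr(b)1-\tr(a)b-\tr(b)a.$$
In particular $ab=ba$ if and only if $\tr(a)b+\tr(b)a=\tr(a)\tr(b)1$. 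It thus suffices to show that, for a pair with $ab\in F1$, this last equation is equivalent to the condition that $a,b\in\Im A$ or $1,a,b$ are linearly dependent.

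To prove this equivalence I would use the decomposition $A=F1\oplus\Im A$ (available because $A$ is quadratic and $\chr F\ne2$), under which $\tr(\a1+u)=2\a$, so that $\tr(c)=0$ is equivalent to $c\in\Im A$. If the equation holds then either $\tr(a)=\tr(b)=0$, whence $a,b\in\Im A$, or, say, $\tr(a)\ne0$ (the case $\tr(b)\ne0$ being symmetric), and then $b=\tr(b)1-\tr(b)\tr(a)^{-1}a\in\spann\{1,a\}$, so $1,a,b$ are linearly dependent. Conversely, if $a,b\in\Im A$ both sides of the equation vanish; and if $1,a,b$ are linearly dependent then $\spann\{1,a,b\}$ is a unital subalgebra (using that $A$ is quadratic) of dimension at most $2$, hence commutative, so $ab=ba$, which under the standing hypothesis $ab\in F1$ is the same as the equation. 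This proves the pointwise statement; quantifying over all pairs with $ab\in F\setminus\{0\}$, respectively $ab=0$, and re-applying Lemma~\ref{vnfsubalg}\eqref{vnfsubalg3} then yields \eqref{vnfcomm2} and \eqref{vnfcomm3}.

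I do not anticipate a genuine obstacle. The point requiring most care is the logical bookkeeping: \eqref{vnfcomm2} and \eqref{vnfcomm3} are equivalences between two universally quantified implications, and both sides reduce to the \emph{same} pointwise equivalence, so the write-up should be organised around that statement rather than around the two implications separately. Two small auxiliary facts worth isolating are that the involution acts as the identity on $F1$, and that a unital $F$-algebra of dimension at most $2$ is commutative, the latter underlying the case in which $1,a,b$ are linearly dependent.
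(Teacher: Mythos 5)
Your proposal is correct and follows essentially the same route as the paper: part (a) by the same direct expansion of $\bar b\bar a$, and parts (b), (c) by reducing via Lemma~\ref{vnfsubalg} to the single pointwise claim that for $ab\in F$ one has $ab=ba$ if and only if $a,b\in\Im A$ or $1,a,b$ are linearly dependent, which is then read off from the identity in (a) using $\overline{ab}=ab$. The only difference is that you spell out a few small steps (the involution fixing $F1$, the explicit linear dependence when some trace is nonzero) that the paper leaves implicit.
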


\begin{proof}
\eqref{vnfcomm1}
This is a straightforward calculation:
\begin{align*}
  \overline{ab} - ba = \bar{b}\bar{a} - ba = (\tr(b)-b)(\tr(a)-a) - ba = \tr(b)\tr(a) - \tr(b)a -
  \tr(a)b \,.
\end{align*}

(\ref{vnfcomm2},\ref{vnfcomm3})
We shall prove that if $ab\in F$, then $ab=ba$ if and only if either $1,a,b$ are
linearly dependent or $a,b\in\Im A$. This, together with Lemma~\ref{vnfsubalg},
implies the result.

Let $ab\in F$. Then $\overline{ab} = ab$, so 
$ab-ba = \overline{ab} - ba = \tr(a)\tr(b) - \tr(a)b - \tr(b)a$ by \eqref{vnfcomm1}.
Hence, if $ab=ba$ then either $1,a,b$ are linearly dependent, or $\tr(a)=\tr(b)=0$, that
is $a,b\in\Im A$. Conversely, if $\tr(a)=\tr(b)=0$ then clearly $ab-ba=0$. If instead
$1,a,b$ are linearly dependent, then these elements are contained in a subalgebra of
dimension at most two, and hence commute with each other.
\end{proof}

\begin{lma} \label{basis}
  Let $B$ be a $3$-dimensional non-commutative involutive algebra. Then there exists a
  basis $u,v$ of $\Im B$ such that $u\ep  v = u$ and either $(u,u)=0$ or $(u,v)=0$. 
\end{lma}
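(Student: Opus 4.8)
The plan is to work inside $\Im B$, which is $2$-dimensional, with its symmetric bilinear form $(\cdot,\cdot)$ (symmetry because $B$ is involutive, by Lemma~\ref{condition}\eqref{invcondition}) and the anticommutative product $\ep$. Since $\dim\Im B=2$, the product $\ep$ is determined by the single value $w\ep z$ for a basis $w,z$, and anticommutativity forces $w\ep w=z\ep z=0$; moreover $w\ep z\in\Im B$. The algebra $B$ is non-commutative, so by Remark~\ref{3dimcommsubalg} the product $\ep$ is not identically zero, hence $w\ep z\ne0$ for any (equivalently, some) basis $w,z$.

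First I would fix any basis $w,z$ of $\Im B$ and set $u:=w\ep z\ne0$. Since $\ep$ is a nonzero alternating bilinear map on a $2$-dimensional space, its image is exactly the line $Fu$; in particular, for every $x\in\Im B$ we have $u\ep x\in Fu$, i.e.\ there is a linear functional $\varphi:\Im B\to F$ with $u\ep x=\varphi(x)u$ for all $x$. Note $\varphi(u)=u\ep u=0$. If $\varphi=0$ then $u\ep x=0$ for all $x$, which would make $B$ commutative by Remark~\ref{3dimcommsubalg} — contradiction. So $\varphi\ne0$, and since $\varphi(u)=0$ while $\varphi\ne0$ on the $2$-dimensional space $\Im B$, the line $Fu$ is exactly $\ker\varphi$. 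Pick any $v_0\in\Im B$ with $\varphi(v_0)=1$; then $\{u,v_0\}$ is a basis of $\Im B$ (as $v_0\notin\ker\varphi=Fu$) and $u\ep v_0=\varphi(v_0)u=u$. Thus I already have a basis with the first property $u\ep v_0=u$.

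Next I would adjust $v_0$ to arrange the alternative condition $(u,u)=0$ or $(u,v)=0$, while preserving $u\ep v=u$. The only freedom left is to replace $v_0$ by $v:=v_0+\lambda u$ for $\lambda\in F$; this keeps $u\ep v=u\ep v_0+\lambda\,u\ep u=u$, and it changes $(u,v_0)$ to $(u,v)=(u,v_0)+\lambda(u,u)$. Now split into two cases. If $(u,u)\ne0$, choose $\lambda=-(u,v_0)/(u,u)$ (possible since $\chr F$ is arbitrary here — wait, actually this argument needs no characteristic hypothesis, division by $(u,u)\ne0$ is fine in any field), giving $(u,v)=0$. If $(u,u)=0$, take $v:=v_0$, and the first alternative $(u,u)=0$ holds directly. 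In either case $u,v$ is a basis of $\Im B$ with $u\ep v=u$ and $(u,u)=0$ or $(u,v)=0$, as required.

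The steps are all elementary; the only place that needs care is the claim that non-commutativity forces $\varphi\ne0$, i.e.\ that the single structure constant $w\ep z$ of $\ep$ is nonzero. This is exactly the content of Remark~\ref{3dimcommsubalg} applied to $\Im B$ (with $\dim\Im B=2$, any two elements are linearly dependent together with their $\ep$-product only when that product vanishes), so I would invoke it rather than reprove it. I expect no serious obstacle beyond bookkeeping; the characteristic hypothesis $\chr F\ne2$, in force for this subsection, is needed only to have the Osborn decomposition $B=F1\oplus\Im B$ and the symmetry criterion available, not for the linear algebra above.
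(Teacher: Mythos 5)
Your proof is correct and follows essentially the same route as the paper: both take $u$ to span the ($1$-dimensional) image of $\ep$, find $v$ with $u\ep v=u$, and then note that when $(u,u)\ne0$ one can choose $v$ orthogonal to $u$ (your $v_0+\lambda u$ adjustment is just an explicit way of landing in $u^\perp$). No gaps.
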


\begin{proof}
Since $B$ is involutive, the bilinear form $(\cdot,\cdot)$ is symmetric. Consequently,
commutativity of $B$ is equivalent to the condition that $x\ep  y=0$ for all 
$x,y\in\Im B$.  
As $B$ is not commutative, the product $\ep $ on $\Im B$ is non-trivial, so there exist
$x,y\in\Im B$ such that $x\ep  y\ne0$. Set $u=x\ep  y$. Using the anti-commutaticity of
$\ep$ we compute
$$(\a x+\b y)\ep (\gamma x + \d y) = \a\gamma x\ep  x + \a\d x\ep  y +
\b\gamma y\ep  x +\b\d y\ep  y =(\a\d-\b\gamma)x\ep  y =
(\a\d-\b\gamma)u \,. $$ 
This means that $(\Im B)\ep (\Im B) = \spann\{u\}$, and that $w\ep z=0$ if and
only if $w,z\in\Im A$ are linearly dependent. Now, if $(u,u)\ne0$ then there exists a
vector $v\in\Im B$ such that $(u,v)=0$ and $u\ep  v=u$, and thus $u,v$ is a basis of $\Im B$
with the required properties. 
In case $(u,u)=0$, any $v\in\Im B$ such that $u\ep  v=u$ will do. 
\end{proof}

\begin{prop} \label{invvnf}
  Let $A$ be an involutive algebra. The following statements are equivalent.
  \begin{enumerate} \renewcommand{\theenumi}{\roman{enumi}}
  \item The algebra $A$ is von-Neumann finite;
    \label{invvnf1}
  \item every $3$-dimensional subalgebra of $A$ is either commutative or associative;
    \label{invvnf2}
  \item every $3$-dimensional subalgebra of $A$ is either commutative or isomorphic to the
    algebra $U$ of upper triangular $2\times2$ matrices. 
    \label{invvnf3}
  \end{enumerate}
\end{prop}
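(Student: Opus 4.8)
The plan is to prove the equivalences by establishing $\eqref{invvnf3}\Rightarrow\eqref{invvnf2}\Rightarrow\eqref{invvnf1}\Rightarrow\eqref{invvnf3}$, where the first implication is trivial (the algebra $U$ is associative) and the second follows from Lemma~\ref{fdass} together with Lemma~\ref{quadvnf}\eqref{quadvnf2}: a $3$-dimensional subalgebra that is commutative is von-Neumann finite, and one that is associative is von-Neumann finite by finite-dimensionality, so $A$ is von-Neumann finite. The substance of the proposition therefore lies in $\eqref{invvnf1}\Rightarrow\eqref{invvnf3}$: assuming $A$ von-Neumann finite, I must show that every $3$-dimensional subalgebra $B$ of $A$ is either commutative or isomorphic to $U$.

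So let $B\subset A$ be a $3$-dimensional subalgebra that is \emph{not} commutative; note $B$ is again involutive. I would invoke Lemma~\ref{basis} to obtain a basis $u,v$ of $\Im B$ with $u\ep v = u$ and either $(u,u)=0$ or $(u,v)=0$. The next step is to exploit von-Neumann finiteness through Lemma~\ref{vnfcomm}\eqref{vnfcomm2}: whenever two elements of $B$ have product in $F\setminus\{0\}$, they must either both lie in $\Im B$ or be linearly dependent together with $1$. I would apply this to cleverly chosen elements built from $u$ and $v$ — for instance products like $uv = (u,v) + u$, or $(1+\alpha u)(1+\beta v)$-type expressions — to force strong constraints on the scalars $(u,u)$, $(v,v)$, $(u,v)$. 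Using the multiplication formula \eqref{quadmult} and the relation $u\ep v = u$, the product $vu = (u,v) - u$, so $uv - vu = 2u \neq 0$, confirming non-commutativity; the arithmetic conditions from von-Neumann finiteness should pin down the remaining bilinear-form values, and I expect the outcome to be that, after rescaling $v$, one has $(u,u)=(u,v)=0$ and $(v,v)$ arbitrary, or some similarly normalised form.

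Once $B$ is reduced to a normal form, the final step is to produce an explicit isomorphism $B\xrightarrow{\sim}U$. In $U=\left\{\left(\begin{smallmatrix}a&b\\0&c\end{smallmatrix}\right)\right\}$, a natural basis of the "imaginary part" relative to $1 = \left(\begin{smallmatrix}1&0\\0&1\end{smallmatrix}\right)$ is given by, say, $e = \left(\begin{smallmatrix}1&0\\0&-1\end{smallmatrix}\right)$ and $f = \left(\begin{smallmatrix}0&1\\0&0\end{smallmatrix}\right)$ (up to shifting $e$ to make it trace-free in the involutive sense), and one computes $e\ep f$, $(e,e)$, $(f,f)$, $(e,f)$ directly; I would match these against the normalised invariants of $B$ to read off the isomorphism, then verify it is an algebra homomorphism using \eqref{quadmult}. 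The main obstacle I anticipate is the bookkeeping in the case analysis of Lemma~\ref{basis} (the two cases $(u,u)=0$ versus $(u,v)=0$) combined with correctly identifying which scalar combinations are forced to vanish by von-Neumann finiteness versus which remain free — in particular making sure that a non-commutative $B$ which is von-Neumann finite cannot have, say, a non-degenerate anisotropic form on $\Im B$ (which would make it a quaternion-type algebra and hence not embeddable as upper-triangular matrices). Ruling out the "too non-degenerate" configurations via Lemma~\ref{vnfcomm}\eqref{vnfcomm2} is where the real work is, and it is also what simultaneously yields the parenthetical remark after Theorem~\ref{inv} that every non-commutative associative involutive $3$-dimensional algebra is isomorphic to $U$.
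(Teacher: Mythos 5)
Your route is exactly the paper's: \eqref{invvnf3}$\Rightarrow$\eqref{invvnf2} because $U$ is associative, \eqref{invvnf2}$\Rightarrow$\eqref{invvnf1} from Lemma~\ref{fdass} together with Lemma~\ref{quadvnf}\eqref{quadvnf2}, and the substance is \eqref{invvnf1}$\Rightarrow$\eqref{invvnf3}, attacked by combining Lemma~\ref{basis} with Lemma~\ref{vnfcomm}\eqref{vnfcomm2}. What you leave as ``cleverly chosen elements'' is, however, the entire content of that implication, and your guess at the outcome is not quite right. The paper evaluates three concrete products: $(0,u)(-1,v)=(u,v)$, then $(1,u+v)(-1,v)=(v,v)-1$, then $(1,u+v)(0,u)=(u,u)$, each of which lies in $F$; in each case the two factors are linearly independent together with $1$ and not both in $\Im B$, so Lemma~\ref{vnfcomm}\eqref{vnfcomm2} forces each product to be zero, giving $(u,v)=0$, $(v,v)=1$ and $(u,u)=0$. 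Together with $u\ep v=u$ and \eqref{quadmult} this yields $u^2=0$, $v^2=1$, $uv=u=-vu$, which is precisely the multiplication table of $U$, so no further case analysis or normalisation is needed (in particular the dichotomy in Lemma~\ref{basis} plays no role here).

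The point you should correct is your expectation that the normal form leaves ``$(v,v)$ arbitrary after rescaling $v$.'' Once $u\ep v=u$ is imposed, $v$ admits no rescaling: replacing $v$ by $\lambda v$ gives $u\ep(\lambda v)=\lambda u$, and compensating by rescaling $u$ does not restore $\lambda=1$; replacing $v$ by $v+\alpha u$ leaves $(v,v)$ unchanged once $(u,v)=(u,u)=0$. So $(v,v)$ is a genuine invariant, and an algebra with $u\ep v=u$, $(u,u)=(u,v)=0$ but $(v,v)=c\ne1$ is neither associative (compare $(vv)u=cu$ with $v(vu)=u$) nor von-Neumann finite. It is exactly the product $(1,u+v)(-1,v)$ that pins $(v,v)=1$, and this step cannot be absorbed into a change of basis. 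With that computation supplied, your argument becomes the paper's proof.
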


\begin{proof}
As all commutative algebras and all finite-dimensional associative algebras are
von-Neumann finite, the implication \eqref{invvnf2}$\Rightarrow$\eqref{invvnf1} is
immediate from Lemma~\ref{quadvnf}\eqref{quadvnf2}. Moreover, we have
\eqref{invvnf3}$\Rightarrow$\eqref{invvnf2} since the algebra $U$ is associative.
To prove \eqref{invvnf1}$\Rightarrow$\eqref{invvnf3}, by
Lemma~\ref{quadvnf}\eqref{quadvnf2}, it suffices to show that if $B$ is a $3$-dimensional
von-Neumann-finite non-commutative involutive algebra, then $B\simeq U$.

By Lemma~\ref{basis}, there exists a basis $u,v$ of $\Im B$ such that $u\ep  v=u$.
For this basis, we have 
$$(0,u)(-1,v) = ((u,v),-u+u\ep  v) = (u,v)\in F.$$ 
Moreover, the elements $1,(0,u), (-1,v)$ of $B$ are linearly independent, 
$(-1,v)\notin\Im B$ and $B$ is von-Neumann finite.
Thus Lemma~\ref{vnfcomm}\eqref{vnfcomm2} implies that $(u,v)=(0,u)(-1,v) = 0$.
Similarly, again applying Lemma~\ref{vnfcomm}\eqref{vnfcomm2},
\begin{align*}
  (1,u+v)(-1,v) &= (-1+(v,v), v - u-v+u\ep  v) = (v,v)-1\in F 
  &\Rightarrow&\quad 
  (v,v)=1\,,\;\mbox{and} \\
  (1,u+v)(0,u) &= ((u,u), u + v\ep  u) = (u,u)\in F
  &\Rightarrow&\quad
  (u,u)=0\,.  
\end{align*}
From the identities $u\ep  v=u$, $(u,u)=(u,v)=0$, $(v,v)=1$, and the equation
\eqref{quadmult}, it is easy to work out the multiplication table of $B$: 
$u^2 =0$, $v^2 = 1$ and $uv=u=-vu$. 
This coincides with the multiplication table of $U$ given in Table~\ref{Umult} 
(see Example~\ref{HU} below), consequently, $B\simeq U$. 
\end{proof}

\begin{prop} \label{invrev}
Let $A$ be an involutive algebra. The following statements are equivalent.
\begin{enumerate}\renewcommand{\theenumi}{\roman{enumi}}
\item The algebra $A$ is reversible. \label{invrev1}
\item Every $3$-dimensional subalgebra of $A$ is commutative. \label{invrev2}
\end{enumerate}
\end{prop}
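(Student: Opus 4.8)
The plan is to prove Proposition~\ref{invrev} by establishing the two implications separately, leaning heavily on the machinery already developed for von-Neumann finiteness. The direction \eqref{invrev2}$\Rightarrow$\eqref{invrev1} is essentially free: if every $3$-dimensional subalgebra of $A$ is commutative, then in particular every such subalgebra is reversible, so by Lemma~\ref{quadvnf}\eqref{quadvnf2} (applied in its reversible incarnation, recalling that involutive algebras are quadratic) the algebra $A$ itself is reversible. One could equally invoke Lemma~\ref{quadvnf}\eqref{quadvnf3}, which directly says that $A$ is von-Neumann finite \emph{and} reversible precisely when every $3$-dimensional subalgebra is commutative; but since we only want reversibility here, the weaker argument suffices and mirrors the structure of the proof of Proposition~\ref{invvnf}.

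For the substantive direction \eqref{invrev1}$\Rightarrow$\eqref{invrev2}, I would argue by contraposition: suppose some $3$-dimensional subalgebra $B$ of $A$ is \emph{not} commutative, and produce witnesses $a,b\in B$ with $ab=0$ but $ba\ne0$, so that $B$ — and hence $A$ — fails to be reversible. By Lemma~\ref{basis} there is a basis $u,v$ of $\Im B$ with $u\ep v=u$ and either $(u,u)=0$ or $(u,v)=0$. The idea is to exploit Lemma~\ref{vnfcomm}\eqref{vnfcomm3}: reversibility of $B$ would force any pair $a,b\in B$ with $ab=0$ to satisfy either $a,b\in\Im B$ or $1,a,b$ linearly dependent. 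So I want to exhibit a product equal to $0$ that violates this. Using \eqref{quadmult}, for $a=(\a,u),b=(\b,v)$ with $\a,\b\in F$ we get $ab=(\a\b+(u,v),\,\a v+\b u+u)$, whose $\Im B$-component vanishes iff $\a v+(\b+1)u=0$, i.e.\ $\a=0$ and $\b=-1$ (since $u,v$ are independent); then $ab=((u,v),0)=(u,v)\in F$, which is $0$ exactly in the case $(u,v)=0$ of Lemma~\ref{basis}. In that case $a=(0,u)\in\Im B$ but $b=(-1,v)\notin\Im B$, and $1,(0,u),(-1,v)$ are linearly independent, so Lemma~\ref{vnfcomm}\eqref{vnfcomm3} is contradicted. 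In the remaining case $(u,u)=0$ (with possibly $(u,v)\ne0$), the same computation as in the proof of Proposition~\ref{invvnf} — applying $(1,u+v)(-1,v)$ and $(1,u+v)(0,u)$, now via Lemma~\ref{vnfcomm}\eqref{vnfcomm2} if we also assume von-Neumann finiteness, or better: I should instead directly search for a zero product. Concretely, $(\a,u)(\b,v)=0$ needs both components zero; the $\Im B$-part gives $\a=0,\b=-1$ as above, forcing the $F$-part to be $(u,v)$, so a genuine zero product with these vectors requires $(u,v)=0$. Thus if $(u,v)\ne0$ I first replace $v$ by $v' = v - \tfrac{(u,v)}{(u,u)}u$ when $(u,u)\ne0$ to arrange $(u,v')=0$ while keeping $u\ep v'=u$; and when $(u,u)=0$ the case analysis of Lemma~\ref{basis} already hands me a basis with $(u,v)=0$ unless — I need to recheck — so the cleanest route is: in all non-commutative cases, produce a basis $u,v$ of $\Im B$ with $u\ep v=u$ and $(u,v)=0$, then $a=(0,u)$, $b=(-1,v)$ satisfy $ab=0$, $ba=((v,u),u\ep v + \ldots)$ has nonzero $\Im B$-component $u\ne0$, so $ba\ne0$.

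The main obstacle is ensuring that one can always normalise the basis of $\Im B$ so that $(u,v)=0$ while preserving $u\ep v=u$; Lemma~\ref{basis} gives this only in one of its two cases, so I must handle the case $(u,u)=0$, $(u,v)\ne0$ separately, either by a change of the \emph{second} basis vector (which changes $\ep$-products in a controlled, anti-commutative way, so $u\ep v$ is unaffected up to scalar) or by choosing the zero-product witnesses differently — e.g.\ taking $a$ to be a suitable element with $\tr(a)\ne0$ outside $\spann\{1,u,v\}$'s expected shape and computing $ab$ via \eqref{quadmult} directly. Once the normalisation $(u,v)=0$ is in place, everything reduces to the two-line computation $ab=0\ne ba$ above, and the proof concludes by Lemma~\ref{vnfsubalg}. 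I would write the final version to parallel the proof of Proposition~\ref{invvnf} as closely as possible for readability.
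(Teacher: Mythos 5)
Your direction \eqref{invrev2}$\Rightarrow$\eqref{invrev1} is fine and matches the paper, and your computation in the normalised case is correct: if $u\ep v=u$ and $(u,v)=0$, then $(0,u)(-1,v)=0$ while $(-1,v)(0,u)=(0,-2u)\ne0$, so $B$ is not reversible. The problem is the case you yourself flag, $(u,u)=0$ and $(u,v)\ne0$, which you do not actually resolve --- and your first suggested fix provably cannot work. Since $u\ep v=u\ne 0$ forces $u$ to span $(\Im B)\ep(\Im B)$, the vector $u$ is determined up to scalar, and the solutions $w$ of $u\ep w=u$ form the coset $v+\spann\{u\}$; hence $(u,v+\lambda u)=(u,v)+\lambda(u,u)=(u,v)$ for every admissible replacement of the second basis vector when $(u,u)=0$. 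So $(u,v)\ne0$ is an invariant of $B$ in this case, no normalisation to $(u,v)=0$ exists, and such algebras genuinely occur (take $(u,u)=(v,v)=0$, $(u,v)=1$, $u\ep v=u$). Your ``cleanest route'' claim that one can always arrange $(u,v)=0$ is therefore false, and restricting the witnesses to have imaginary parts exactly $u$ and $v$ (as in your computation showing $\a=0$, $\b=-1$ is forced) yields no zero product at all in this case.

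This residual case is exactly where the paper's proof does its real work. Assuming $B$ reversible, the paper first deduces $(u,v)\ne0$ (else $(1,v)(0,u)=(u,v)=0$ violates Lemma~\ref{vnfcomm}\eqref{vnfcomm3}), hence $(u,u)=0$ by Lemma~\ref{basis}; then shows $(v,v)\ne1$ via the product $(1,u+v)(1,u-v)=1-(v,v)$; and finally constructs the witness $y=(1-(v,v))u+(u,v)v$, for which $(1,v)\bigl(-(u,v),y\bigr)=0$ with $v,y$ linearly independent (this is where $(v,v)\ne1$ is needed), contradicting reversibility. Your second fallback --- ``choosing the zero-product witnesses differently'' with a general element of $B$ --- is the right instinct, but it amounts to solving for precisely this $y$, and without carrying that out the proof is incomplete.
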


\begin{proof}
Again, the implication \eqref{invrev2}$\Rightarrow$\eqref{invrev1} follows directly from
Lemma~\ref{quadvnf}\eqref{quadvnf2}.
For the converse, assume that there exists a reversible involutive algebra $B$ of
dimension $3$ that is not commutative. 
Then, by Lemma~\ref{basis}, there exist $u,v\in\Im B$ such that $u\ep  v=u$ and either
$(u,u)=0$ or $(u,v)=0$. 
Computing
$$(1,v)(0,u) = ((u,v), u+v\ep  u)=(u,v)\in F$$
and invoking Lemma~\ref{vnfcomm}\eqref{vnfcomm3}, we get that $(u,v)\ne0$ and, consequently, $(u,u)=0$. 
Now, similarly,
\begin{align*}
  (1,u+v)(1,u-v) &= (1-(v,v),\,(u-v)+(u+v) + (u+v)\ep (u-v)) \\
  &= (1-(v,v),\,2u-u\ep  v +v\ep  u) = 1-(v,v)\in F
\end{align*}
and thus $1-(v,v)\ne0$ by Lemma~\ref{vnfcomm}\eqref{vnfcomm3}.
Now, setting $y= (1-(v,v))u + (u,v)v$ we get
\begin{align*}
(v,y) &= (1-(v,v))(u,v) + (u,v)(v,v) = (u,v),\\
v\ep  y &= (1-(v,v))v\ep  u = ((v,v)-1)u, \\
\intertext{and hence} 
  (1,v)(-(u,v), y) &= 
(-(u,v) + (v,y) ,\, y -(u,v)v + v\ep  y) \\
&=  ( 0,\, (1-(v,v))u + (u,v)v -(u,v)v + ((v,v)-1)u ) = 0 \,.
\end{align*}
Note that since $(1-(v,v))\ne0$, the elements $v$ and $y$ are linearly independent. As
$(1,v)\notin\Im B$, Lemma~\ref{vnfcomm}\eqref{vnfcomm3} implies that $B$ cannot be
reversible, a contradiction. Hence $B$ is commutative.
\end{proof}

As the norm of $U$ is isotropic, Proposition~\ref{invvnf} and Proposition~\ref{invrev}
together have the following consequences. 

\begin{cor} \label{revisvnf}
Let $A$ be an involutive algebra.
  \begin{enumerate}
  \item If $A$ is reversible then it is von-Neumann finite.
    \label{revisvnf1}
  \item If $A$ is von-Neumann finite and the norm of $A$ is anisotropic, then $A$ is
    reversible.
    \label{revisvnf2}
  \end{enumerate}
\end{cor}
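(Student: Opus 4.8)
The plan is to deduce both statements directly from Propositions~\ref{invvnf} and~\ref{invrev}, together with the single extra observation that the algebra $U$ has isotropic norm.

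For part~\eqref{revisvnf1}, I would assume $A$ is reversible. By Proposition~\ref{invrev} (the implication \eqref{invrev1}$\Rightarrow$\eqref{invrev2}), every $3$-dimensional subalgebra of $A$ is commutative; in particular, every such subalgebra is either commutative or associative. Proposition~\ref{invvnf} (the implication \eqref{invvnf2}$\Rightarrow$\eqref{invvnf1}) then yields that $A$ is von-Neumann finite.

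For part~\eqref{revisvnf2}, I would argue by contradiction. Suppose $A$ is von-Neumann finite with anisotropic norm but fails to be reversible. By Proposition~\ref{invrev}, $A$ then contains a $3$-dimensional non-commutative subalgebra $B$. The subalgebra $B$ is again involutive — the involution restricts to $B$, since $\bar{b}=\tr(b)-b\in B$ for all $b\in B$ — and it is von-Neumann finite (being a subalgebra of a von-Neumann finite algebra). Hence Proposition~\ref{invvnf} (the implication \eqref{invvnf1}$\Rightarrow$\eqref{invvnf3}) forces $B\simeq U$. But the norm of $U$ is isotropic: for a non-zero $u\in\Im U$ with $u^2=0$ one has $\n(u)=u\bar{u}=-u^2=0$. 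Since the norm of $B$ is the restriction of the norm of $A$, this produces an isotropic vector for $\n$ on $A$, contradicting anisotropy. Therefore $A$ is reversible.

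I do not expect any real obstacle here; the only point deserving a word of care is that passing to a subalgebra preserves the involutive structure and the norm, so that the isomorphism $B\simeq U$ genuinely exhibits a non-trivial zero of $\n$ on $A$. The rest is a formal consequence of the two preceding propositions and the identity $\n(a)=a\bar{a}$.
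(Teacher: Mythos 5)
Your argument is correct and is exactly the reasoning the paper intends: the corollary is stated there as an immediate consequence of Propositions~\ref{invvnf} and~\ref{invrev} together with the isotropy of the norm of $U$, and your proposal simply fills in those same steps (including the correct observations that the involution and norm restrict to subalgebras and that an algebra isomorphism onto $U$ transports the isotropic vector). Nothing to add.
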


The following result establishes the proof of Theorem~\ref{quadflex}\eqref{quadflex3}.

\begin{prop}\label{algclosedrev}
  Assume that $F$ is algebraically closed, and let $A$ be a flexible and quadratic
  $F$-algebra. Then $A$ is reversible if and only if one of the following conditions
  holds:
  \begin{enumerate}\renewcommand{\theenumi}{\roman{enumi}}
  \item $u\ep v=0$ for all $u,v\in \Im A$, that is, $A$ is commutative;%
    \label{ep=0}
  \item $\n(u)=0$ for all $u\in \Im A$, and the map 
    $\lt_u^{\ep}:\Im A\to\Im A,\: v\mapsto u\ep v$ is nilpotent for all $u\in\Im A$.%
    \label{n=0}
  \end{enumerate}
\end{prop}
Note that in the latter case, the subspace $\Im A$ is an ideal in $A$, and $A$ is
  obtained from $\Im A$ by adjoining an identity element. Hence, reversible
algebras of this type are given by anti-commutative (non-unital) algebras $V$ satisfying
the nilpotency condition specificed in \eqref{n=0}. This holds, for example, if $V$ is a
nilpotent Lie algebra.

\begin{proof}
Combining Proposition~\ref{invrev} with Lemma~\ref{3dimcommsubalg}\eqref{3dcsab}, we see
that $A$ is reversible if and only if $\lt_u^{\ep}$ is nilpotent for all $u\in \Im A$.
Clearly, this holds if $A$ satisfies either of the conditions \eqref{ep=0} and \eqref{n=0}. 
For the converse implication, assume that $\lt_u^{\ep}$ is nilpotent for all $u\in\Im A$.
We shall show that for any anisotropic $w\in\Im A$, the identity $\lt_w^{\ep}=0$ holds.
Then, since the anisotropic vectors form a Zariski-open subset of $\Im A$, whereas the condition $\lt_w^{\ep}=0$ is closed, it follows that either $\n=0$ or $\lt_w=0$ for all $w\in\Im A$. 

First, linearising the identity $(u,u\ep v)=0$ in
Lemma~\ref{condition}\eqref{flexcondition} gives the equivalent condition
\begin{equation}
(u\ep v,w)=(u,v\ep w)
\end{equation}
for all $u,v,w\in\Im A$.
Assume that $w\in\Im A$ is anisotropic, $0\le m <\dim(\Im A)$, and that there exist
linearly independent
anisotropic vectors $w=v_1,v_2,\ldots,v_m$, such that $w\ep v_i=0$
for all $i\le m$. 
Set $W_m=\spann\{v_1,\ldots,v_m\}^{\perp}\subset\Im A$. 
Then, for all $u\in\Im A$,
$$0=(w\ep v_i,u) = -(v_i\ep w, u) = -(v_i,w\ep u),$$
that is, $\lt_w^{\ep}(\Im A)\subset W_m$.
In particular, the subspace $W_m$ is invariant under $\lt_w^{\ep}$. 
The map $\lt_w^{\ep}$ being nilpotent, it means that there exists a non-zero element 
$z\in W_m$ such that $\lt_w^{\ep}(z)=0$.

Now, set $v_{m+1}=z$ if $\n(z)\ne0$, and $v_{m+1}=z+w$ if $\n(z)=0$. In either case, the
elements $v_1,\ldots,v_{m+1}\in\Im A$ are linearly independent, anisotropic, and 
contained in $\ker(\lt_w^{\ep})$. By induction, $\ker(\lt_w^{\ep})=\Im A$. 
\end{proof}

\section{Examples}
\label{examples}

We start by mentioning two particular, associative algebras that are of certain
importance for our study.
\begin{ex} \label{HU}
  Let $\chr F\ne2$. 
  The split quaternion $F$-algebra $H$ is, as already mentioned, isomorphic to the
  algebra $F^{2\times2}$ of $2\times2$ matrices with entries in $F$.
  Choosing
  \[
  i=\pmatr{-1&0\\0&1},\quad j=\pmatr{0&1\\1&0},\quad\mbox{and}\quad k=\pmatr{0&1\\-1&0}
  \]
  gives a basis $\underline{e}=(1,i,j,k)$ of $H$, with multiplication as in
  Table~\ref{Hmult}.

It is easy to see that $\Im H=\spann\{i,j,k\}$, 
and that $\underline{e}$ is an orthogonal basis of $H$.
Indeed, since
$\<x,y\>= \n(x+y)-\n(x)-\n(y) = xy+yx$ for all $x,y\in\Im H$, a subset of $\Im H$ is
orthogonal if and only its elements pair-wise anti-commute -- as do $i,j,k$.

Now consider the subalgebra $U$ of $F^{2\times2}$, consisting of all upper triangular
matrices. Being a subalgebra of an involutive associative algebra, $U$ is again 
involutive and associative.
We choose a basis $\underline{f}=(1,u,v)$, where
$$u=\frac{1}{2}(j+k)= \pmatr{0&1\\0&0}\qquad\mbox{and}\qquad v=i=\pmatr{-1&0\\0&1}\,.$$
Again, $\underline{f}$ is orthogonal, and $\Im U = \spann\{u,v\}$.
The multiplication of elements in $\underline{f}$ is given by Table~\ref{Umult}.

  \begin{table}[htb] 
\begin{minipage}[b]{0.45\linewidth}
    \[
    \begin{array}{c|cccc} 
      \cdot& 1&i&j&k\\
      \hline
      1& 1&i&j&k\\
      i& i&1&k&j\\
      j& j&-k&1&-i\\
      k& k&-j&i&-1
    \end{array}
\]
   \caption{Multiplication in $H$.} \label{Hmult}
\end{minipage}
\quad
\begin{minipage}[b]{0.45\linewidth}
    \[
    \begin{array}{c|ccc} 
      \cdot& 1&u&v\\
      \hline
      1&1&u&v\\
      u& u&0&u \\
      v& v&-u&1
    \end{array}
\]\vspace*{.8ex}
\caption{Multiplication in $U$.} \label{Umult}
\end{minipage}
  \end{table}

As $(1+v)u=0$ but $u(1+v)=2u\ne0$, the algebras $U$ and $H$ are not reversible. However,
being finite-dimensional associative algebras, they are von-Neumann finite.
\end{ex}

The following examples demonstrate the necessity of some of the assumptions in our main theorems.
First we give an example of a flexible quadratic algebra with non-degenerate norm that is neither reversible nor von-Neumann finite. The existence of such an algebra implies that the hypothesis in Theorem~\ref{quadflex}\eqref{quadflex2}, that the norm is non-degenerate on every $3$-dimensional subalgebra, cannot be replaced by the weaker condition of the norm being non-degenerate on $A$ itself.

\begin{ex} \label{flexquadnonvnf}
Assume that $\chr F\ne2$, and let $H$ be the split quaternion algebra with basis
$\underline{e}=(1,i,j,k)$ as in Example~\ref{HU}.

Set $A=H\oplus Fl$, where $l(\Im H)=(\Im H) l =0$ and $l^2=-1$. This defines $A$ as a
quadratic algebra with $\Im A = \spann\{i,j,k,l\}$.
Since $H$ is associative and thus in particular flexible, the bilinear form
$(\cdot,\cdot)_H$ on $H$ is symmetric, and $(u,u\ep v)_H=0$ for all
$u,v\in\Im H$. By the construction, it follows that also $(\cdot,\cdot)_A$ is symmetric,
and $(w,w\ep z)_A=0$ for all $w,z\in\Im A$. Hence, Lemma~\ref{condition} implies
that $A$ is flexible and thus, in particular, involutive. The elements $1,i,j,k,l$
constitute an orthogonal basis of $A$, so the norm on $A$ is non-degenerate.

Set $x=i+l$, $y=j+k$, and $B=\spann\{1,x,y\}\subset A$. 
We have 
\begin{equation*}
  xy=(i+l)(j+k) = ij+ik=k+j=y \quad\mbox{and}\quad 
  yx=(j+k)(i+l)=ji+ki= -k-j=-y
\end{equation*}
so $B$ is a
$3$-dimensional non-commutative subalgebra of $A$.
Moreover, $x^2=i^2+l^2=1-1=0$, so $x^2y=0$, whereas $x(xy)=xy=y\ne x^2y$. Hence $B$ is
not associative either. From Theorem~\ref{inv} it now follows that the algebra $A$ is not
von-Neumann finite, and not reversible.
Indeed, straightforward calculations shows that
\begin{align*}
(1-x-y)(1+x-y)&=1, \qquad 
& y(1+x)&=0,
\\
(1+x-y)(1-x-y) &= 1+4y\,,
& (1+x)y&=2y\,,
\end{align*}
giving explicit counterexamples.

We remark that $B^\perp\cap B=\Im B$, so indeed, the algebra $A$ does not satisfy the
condition in Theorem~\ref{quadflex}\eqref{quadflex2} that the norm be non-degenerate on
every $3$-dimensional subalgebra. 
\end{ex}

Dropping the condition of flexibility, there exist involutive algebras even with
anisotropic norm that are not reversible and not von-Neumann finite.

\begin{ex}
Assume that $\chr F\ne2$. Let $V$ be a $2$-dimensional vector space over $F$, and 
$q:V\to F$ a quadratic form such that the form 
$1\perp(-q):F\oplus V\to F,\:(\a,x)\mapsto\a^2-q(x)$ is anisotropic (\eg, in the real case,
$q$ is negative definite).
Take an orthogonal basis $(u,v)$ of $V$, and set $u\ep v=-v\ep u=u$ and 
$(x,y) = \frac{1}{2}\<x,y\>_q = \frac{1}{2}\left(q(x+y)-q(x)-q(y) \right)$ 
for all $x,y\in V$. 
This defines a quadratic algebra structure on $A=F\oplus V$, with $\Im A=V$ and
multiplication given by the equation \eqref{quadmult}. 

Clearly, $(\cdot,\cdot)$ is symmetric, so $A$ is involutive by
Lemma~\ref{condition}\eqref{invcondition}. Moreover, 
$$\n(\a,x) = (\a,x)\overline{(\a,x)} = (\a,x)(\a,-x) = \a^2-(x,x) = \a^2-q(x),$$
meaning that $\n$ is anisotropic.
Now,
\begin{align*}
  (0,u)(-1,v) &= ((u,v),\, -u+u\ep v) = 0, \\
  (-1,v)(0,u) &= ((v,u),\, -u+v\ep u) = (0,-2u)\ne 0,\\
\intertext{so $A$ is not reversible; and}
(0,u)(-1,u+v) &= (0,u)(-1,v) + (0,u)^2 = -\n(u)\ne0,\\
(-1,u+v)(0,u) &= (-1,v)(0,u) + (0,u)^2 = (-\n(u), -2u) \ne (0,u)(-1,u+v),
\end{align*}
implying that $A$ is not von-Neumann finite either. 
It follows from Theorem~\ref{quadflex}\eqref{quadflex2} that $A$ cannot be flexible,
which indeed also can be seen directly from the identities $(uv)u = u^2=-u(-u)=-u(vu)$.
\end{ex}

For associative algebras, reversibility implies von-Neumann finiteness, and by 
Theorem~\ref{inv}, the same is true for involutive algebras over fields of
characteristic different from $2$.
However, this implication does not hold for general non-associative algebras, as the
following example demonstrates.

\begin{ex} \label{revnotvnf}
Let $A$ be any non-commutative non-quadratic division algebra of dimension $3$ over $F$.
(Non-quadraticity is automatic if $\chr F\ne2$, by \cite[Theorem~3]{osborn62}.)
For example, $A$ may be a \emph{twisted field} of dimension $3$ over a finite field $F$
with at least $3$ elements (see \cite{albert58}).

As $A$ is not quadratic, there exists some element $a\in A$ such that
$A=\spann\{1,a,a^2\}$, and non-commutativity then implies that $aa^2\ne a^2a$.
Since $A$ is a division algebra, there exists an element $b\in A$ such that $ab=1$.
If $b\in\spann\{1,a\}$ then $1=ab\in\spann\{a,a^2\}$,
which is impossible since $1,a,a^2$
are linearly independent. 
Hence $b\notin\spann\{1,a\}$ and therefore $ba\ne ab$. Thus, $A$ is not
von-Neumann finite. But $A$ is reversible, since it is a division algebra. 
\end{ex}

\end{document}